\tikzset{ampersand replacement=\&}
\tikzset{
	treenode/.style = {shape=rectangle, rounded corners,
		draw, align=center,
		top color=white, bottom color=blue!20},
	root/.style     = {treenode, font=\small, bottom color=blue!30},
	env/.style      = {treenode, font=\small},
	dummy/.style    = {circle,draw}
}
\definecolor{marronchene}{RGB}{217,205,180}
\definecolor{vertdragee}{RGB}{180,217,190}
\definecolor{bleulavande}{RGB}{183,180,217}
\definecolor{bleuciel}{RGB}{240,240,250}
\definecolor{rougegarance}{RGB}{217,180,180}
\definecolor{grisargent}{RGB}{210,210,210}
\definecolor{oeuf}{RGB}{225,215,210}
\definecolor{vertforet}{RGB}{50,150,50}
\definecolor{bleumarin}{RGB}{50,50,150}
\definecolor{rougerubis}{RGB}{150,50,50}
\definecolor{grisouris}{RGB}{100,100,100}
\definecolor{bleucecile}{RGB}{50,100,110}
\definecolor{vertcecile}{RGB}{50,110,80}
\newcommand{\Q}{{\mathbb Q}}
\newcommand{\F}{{\mathbb F}}
\newcommand{\Z}{{\mathbb Z}}
\newcommand{\ZZ}{{\mathbb Z}}
\newcommand{\R}{{\mathcal R}}
\newcommand{\QQ}{{\mathbb Q}}
\newcommand{\N}{\mathcal{N}}
\newcommand{\LL}{\mathcal{L}}
\newcommand{\K}{\mathcal{K}}
\DeclareMathOperator{\Span}{span}
\title{Individual Discrete Logarithm \\ with Sublattice Reduction}
\titlerunning{Splitting Step for Composite Extension Degree Finite Fields}
\author{Haetham Al Aswad\thanks{Funded by French Ministry of Army  - AID Agence de l'Innovation de D\'{e}fense. Corresponding author: haetham.al-aswad@inria.fr} \and C\'{e}cile Pierrot}
  \institute{Universit\'{e} de Lorraine, CNRS, Inria Nancy, France.}
\date{June 2022}
\begin{document}
\maketitle
\begin{abstract}
The Number Field Sieve and 
its numerous
variants is the best algorithm to compute discrete logarithms in medium
and large characteristic finite fields. When the extension degree $n$ is composite and the characteristic~$p$
is of medium size, the Tower variant 
(TNFS) is asymptotically the
most efficient one. Our work deals with
the last main step, namely the individual logarithm step, that computes a smooth
 decomposition of a given target~$T$ in the finite field thanks to two distinct phases:
an initial splitting step, and a descent tree.

In this article,
we improve on the current state-of-the-art Guillevic's algorithm dedicated to the initial 
splitting step for composite~$n$.
While still exploiting the proper subfields of the target finite field, we modify
the lattice reduction
subroutine that creates a lift in a number field of the target $T$.
Our algorithm returns lifted elements with lower degrees and coefficients, resulting
in lower norms in the number field.
The lifted elements are not only much likely to be smooth because they have smaller norms, 
but it permits to set a smaller smoothness bound for the descent tree. 
Asymptotically, our algorithm is faster and
works for a larger area of finite fields than Guillevic's algorithm, being
now relevant even when the medium characteristic $p$ is  such that $L_{p^n}(1/3) \leq p< L_{p^n}(1/2)$. 
In practice,
we conduct experiments on $500$-bit to $2048$-bit composite finite fields:
Our method becomes more efficient as the
largest non trivial divisor of $n$ grows, being thus particularly adapted to even
extension degrees.
\\
\\
\textbf{Key words:}  Cryptanalysis. Public Key Cryptography. Discrete Logarithm. Finite Fields. Tower Number Field Sieve. MTNFS, STNFS.
\end{abstract}

\section{Introduction}
\label{sec:intro}

\paragraph{Context.} Given a cyclic group $G$, a generator~$g \in G$ and a target $T \in G$,
solving the discrete logarithm problem in $G$ means finding an integer~$x$
modulo $|G|$ such that $g^x = T$. While the post-quantum competition is
ongoing, the discrete logarithm problem is still at the basis of the security of 
many currently-deployed public key protocols. This article deals with the hardness
of this problem when the considered group $G=\F_{p^n}^*$ is all the invertible elements in a finite
field. Small characteristic finite fields are no longer considered in practice
because of the advent of quasipolynomial time 
algorithms~\cite{EC:BGJT14, C:GraKleZum14, EPRINT:KleWes19}
and for this reason we focus on medium and large characteristic.
We recall the usual notation\footnote{We use $L_Q(\alpha)$ instead of $L_Q(\alpha, c)$ when the value of $c$ is not important.} $L_Q(\alpha, c) = \exp ((c+o(1)) (\log Q)^\alpha (\log \log Q)^{1- \alpha})$
 when $o(1)$ tends to~$0$ as $Q = p^n$ tends to infinity. With this
notation, we say that $p=L_Q(\alpha)$ is of medium size if $1/3 < \alpha < 2/3$ and of
large size if $2/3 < \alpha$.

\paragraph{Composite extension degrees in the wild.} 
 In the sequel, we assume that our target finite field
has a non prime extension degree~$n>1$. Let $d$ be the greatest 
proper divisor
of $n$. Considering finite fields with composite extensions is highly motivated by 
pairing-based
cryptography. Pairings first appeared in 1940 when Weil showed a
way to map points of order $r$ on a supersingular elliptic curve to an element
of order $r$ in a finite field,
 but the first algorithm to efficiently compute the Weil pairing was only published in 2004 thanks to Miller~\cite{Miller_pairing}. In the early 2000s,
efficient pairing-based protocols were presented~\cite{C:BonFra01, 
AC:BonLynSha01,DBLP:journals/joc/Joux04}
and nowadays pairings are deployed in the marketplace, for example in the
Elliptic Curve Direct Anonymous Attestation protocol that is embedded in 
the current version of the Trusted Platform Module~\cite{TPM2019}
(TPM2.0 Library), released in 2019.
The security of these protocols relies on both the discrete logarithm problem in
the group of points of a pairing-friendly elliptic curve, and on the discrete
logarithm problem in a non prime finite field, which means where the extension degree
$n>1$. Pairing constructions can work with prime extension degrees, such as $\F_{p^2}$
and $\F_{p^3}$ but composite extensions are common, such as $\F_{p^4}$,
$\F_{p^6}$ and $\F_{p^{12}}$.

\paragraph{Number Field Sieve for composite extensions.} 
The Number Field Sieve  (NFS) and 
its numerous
variants is the fastest algorithm to compute discrete logarithms in finite fields of medium
and large characteristic. It has a $L_{p^n}(1/3,c)$ complexity, where the constant $0<c<2.3$ 
depends on the variant, the characteristic and the extension degree. 
One of these variants is the Tower Number Field Sieve (TNFS), 
known to be asymptotically more efficient than NFS for some fields 
when the extension degree is composite. 
 We can couple both NFS and TNFS
 with a multiple variant -- for any finite field -- and a special variant -- for sparse characteristic finite fields only,
to obtain lower asymptotic complexities. 
The main difference between TNFS and NFS comes from
the representation of the target finite field~$\F_{p^n}$: whereas in NFS the finite field
is represented as a quotient field $\F_p[x]/(f)$ with $f$ a polynomial of degree $n$ over
$\F_p$, TNFS represents it as~$\left(\R/p\R\right)[X]/(\varphi)$ with $\R$ the ring $\R =\Z[t]/(h(t))$,
$h$ being a degree~$\kappa$ polynomial that remains irreducible modulo $p$ and $\varphi$ of degree $\eta$ such that $n = \kappa \eta$.
However, every variant of NFS, including the Tower variant, is designed around
the same steps. After the polynomial selection, that permits to construct the 
target finite field
together with (at least) two auxiliary number fields $\K_1$ and $\K_2$, the
algorithm defines a small set of ``small" elements and creates linear equations among the
discrete logarithms of these elements. This is the sieving phase. A linear algebra step
returns then these specific discrete logarithms. Finally, the individual logarithm step
that is the topic of this article concludes the algorithm. Its aim is to recover the 
discrete logarithm of an arbitrary element~$T$ in the finite field thanks to all the logarithms
already computed in the linear algebra step.
 
 Introduced in 2000 by Shirokauer~\cite{shiro},
TNFS for generic extensions was reinvestigated by Barbulescu, Gaudry and 
Kleinjung~\cite{AC:BarGauKle15}, 
proving
that the asymptotic complexity of TNFS in large characteristics is similar to NFS.
Yet in medium characteristics the complexity is even greater than $L_{p^n}(1/3)$. Kim, Barbulescu~\cite{C:KimBar16} and Jeong~\cite{PKC:KimJeo17}
proposed a method to extend TNFS to composite degree extension $n$, reaching
a $L_{p^n}(1/3)$ complexity in medium characteristics too. When $n$
has an appropriate size, this variant is faster\footnote{In medium characteristics,
NFS has a complexity of $L_{p^n}(1/3, (96/9)^{1/3})$.} than NFS, with a
 complexity of $L_{p^n}(1/3, (48/9)^{1/3})$. 
Moreover, in~\cite{PalashSarkar2019AdvancesinMathematicsofCommunications}, 
Sarkar and Singh presented a unified polynomial selection for TNFS and lowered its complexity in some cases. In~\cite{C:KimBar16,PKC:KimJeo17}, when coupled with the multiple variant or special variant
 TNFS is called MexTNFS or SexTNFS, but in this article we simply denote it by MTNFS or STNFS. 
Designing a sieving step adapted in practice for TNFS, De Micheli, Gaudry 
and Pierrot~\cite{DBLP:conf/asiacrypt/MicheliGP21} reported
in 2021
 the first implementation of TNFS and performed a record computation on a $521$-bit finite field
 with extension $n=6$. Note that computing a discrete logarithm in a finite field with extension degree
 $n>1$ is in practice harder than a discrete logarithm in a prime field of similar bitsize. 
 For instance, the last record on a prime field $\F_p$ was done with NFS in 2019 in a~795-bit finite 
 field~\cite{C:BGGHTZ20}, whereas
 the last record on a field~$\F_{p^4}$ reached a $392$-bit finite field~\cite{recordfp4}. See Table~\ref{tableau_record} for some small extension degree discrete logarithm computations.

 \begin{table}[h]
\centering
   \begin{tabular}{|c|c|c|c|}
   \hline
     $ \,$ Finite field $ \,$ &$ \,$Bitsize of $p^n \,$ &$ \,$ Year$ \,$& $ \,$Team $ \,$\\
     \hline
     $\F_p$ & 795 & 2019 & Boudot, Gaudry, Guillevic, Heninger, \\
     &&&Thom\'{e}, Zimmermann\\ 
     $\F_{p^2}$ &  595 & 2015 & $ \,$ Barbulescu, Gaudry, Guillevic, Morain $\,$ \\ 
     $\F_{p^3}$ & 593 & 2019 & Gaudry, Guillevic, Morain\\ 
     $\F_{p^4}$ & 392& 2015 & $\,$ Barbulescu, Gaudry, Guillevic, Morain $\,$ \\ 
      $\F_{p^5}$ & 324& 2017 & Gr\'{e}my, Guillevic, Morain\\ 
      $\F_{p^6}$ & 521& 2021 & De Micheli, Gaudry, Pierrot\\
      $\F_{p^{12}}$ & 203& 2013 &Hayasaka, Aoki, Kobayashi,Takagi\\
     \hline
   \end{tabular}
      \caption{\label{tableau_record} Discrete logarithm records~\cite{dldatabase}
       in finite fields for various 
      extension degrees, performed with the Number Field Sieve. TNFS is only implemented
      for the $\F_{p^6}$ record, explaining the larger field reached there.}
\end{table}
 
 \vspace{-1cm}

\paragraph{Splitting step.} All the previous results mentioned above mainly focus on
adapting new methods for the context of TNFS, to reduce the complexity of
the dominating sieving and linear algebra steps. 
However Guillevic~\cite{guillevic_descent} dealt with the individual logarithm step that remained 
at the same level of difficulty in TNFS than in NFS.
 Recall that the last step consists in two distinct phases,
first a splitting phase -- also called by some authors smoothing step -- and then a descent
tree. 
Up to this result,
the standard algorithm for initial splitting for such fields was the 
Waterloo algorithm~\cite{blakeFMV,C:BlaMulVan84}, also 
called the
extended gcd method and very similar to the
 fraction method as detailed in~\cite{C:JLSV06}.
These methods iteratively generate a pair of polynomials 
and tests 
both of them for $B$-smoothness, for a given bound $B$.
Guillevic~\cite{guillevic_descent} exploits the proper subfields of the target finite field, resulting in an algorithm
that gives much more smooth decomposition of the target in the initial splitting step.
Besides, Mukhopadhyay and Sarkar's method~\cite{DBLP:journals/ffa/MukhopadhyayS20} 
deals with at the splitting step
for small characteristic finite fields 
with composite extension degrees.~\cite{DBLP:journals/ffa/MukhopadhyayS20}
is dedicated to the Function Field Sieve and is not applicable in our context. 

\paragraph{Our work.}
In this article,
we improve on the current state-of-the-art Guillevic's algorithm dedicated to the initial 
splitting step for composite~$n$.
While still exploiting the proper subfields of the target finite field, 
and running a reduction algorithm on a well-defined lattice as in~\cite{guillevic_descent},
we manage to control the degree of the candidates for the $B$-smoothness test as in~\cite{DBLP:journals/ffa/MukhopadhyayS20}. The key idea is to use sublattices
of the original lattice of~\cite{guillevic_descent} by removing some rows and columns.
As a result, our algorithm returns number field elements with lower degrees and slightly bigger coefficients, resulting when the parameters are well set to lower norms in the number field. 
These elements are not only much likely to be smooth because they have smaller norms, 
but it allows a smaller smoothness bound for the descent tree. As a consequence 
it reduces the
height of the subsequent tree.

Besides, using the BKZ reduction algorithm instead of LLL 
allows to better fine-tune the asymptotic parameters. 
We get an algorithm that works for an asymptotic range of characteristics 
where~\cite{guillevic_descent} does not apply, namely for
$L_{p^n}(1/3) \leq p< L_{p^n}(1/2)$. In this range, 
when $p \neq L_{p^n}(1/3)$, the former asymptotic
complexity was the one of the splitting step of NFS,
whereas we get a better asymptotic complexity for 
composite extensions $n$ 
in $L_{p^n}(1/3, (3(1+\zeta -d/n))^{1/3})$, where $d$ is the largest proper 
divisor of $n$ and $\zeta$ is the value such that the infinite norm of the polynomial defining the 
number field for the lift is in~$p^\zeta$. For instance for even extension degrees,
and for the Conjugation method, we lower the asymptotic complexity
from approximately\footnote{This is the asymptotic complexity for the initial splitting step of NFS, given by Waterloo algorithm.} $L_{p^n}(1/3, 1.82)$ to approximately
$L_{p^n}(1/3, 1.14)$, which is a dramatic asymptotic improvement. Note that the extension degree $n$ is always even for finite fields of supersingular pairing-friendly curves.
Besides, we show that using BKZ instead of LLL 
allows to reach a lower complexity for the individual logarithm step when $p = L_{p^n}(1/2)$.
Moreover, we prove that in large characteristic finite fields where $p = L_Q(\alpha)$ with $\alpha \geq 2/3$, one can apply an enumeration
algorithm to the lattice instead of LLL or BKZ while keeping the same complexity 
for the individual logarithm step.
 Figure~\ref{fig:sublattices} illustrates for
even extension degrees the complexities brought by the use of LLL, BKZ, or an enumeration algorithm depending on the domain.
 Similar results are obtained for odd extension degrees.
 Table~\ref{table:interests} sums up the six existing smoothing methods
available for medium and large characteristic finite fields.

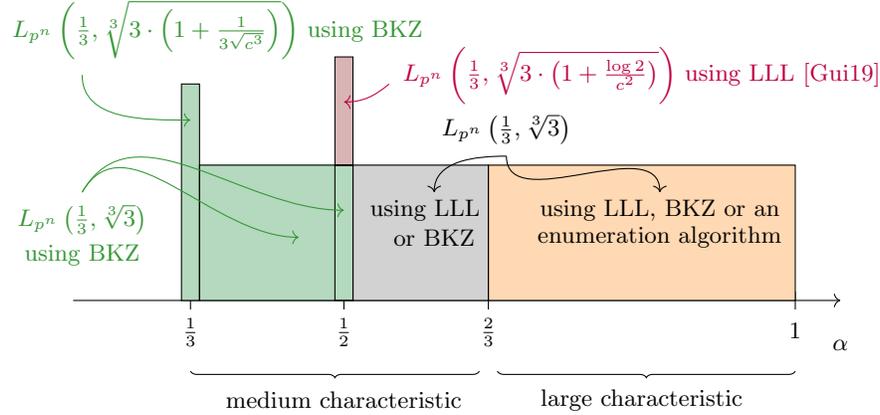
\begin{figure}[h!]
\begin{center}
\begin{tikzpicture}[scale=1.2]
    \draw[->] (2,0) -- (10.5,0);
    \foreach \x/\xtext in {3.3/{\frac{1}{3}}, 5.0/{\frac{1}{2}}, 6.6/{\frac{2}{3}}, 10/1} {
    \draw (\x,0.1cm) -- (\x,-0.1cm) node[below] {$\xtext\strut$};
    }
    \node at (10.5,-0.5) {$\alpha$};
    \fill[fill=vertdragee] (3.2,0) rectangle (3.4,2.4);
    \fill[fill=vertdragee] (3.4,0) rectangle (4.9,1.5);
     \fill[fill=vertdragee] (4.9,0) rectangle (5.1,1.5);
    \fill[fill=rougegarance] (4.9,1.5) rectangle (5.1,2.7);
    \fill[fill=grisargent] (5.1,0) rectangle (6.6,1.5);
     \fill[fill=orange!30] (6.6,0) rectangle (10,1.5);
    \draw (3.2,0) rectangle (3.4,2.4);
    \draw (3.4,0) rectangle (5.1,1.5);
     \draw (4.9,1.5) rectangle (5.1,2.7);
     \draw (4.9,0) rectangle (5.1,1.5);
     \draw (5.1,0) rectangle (6.6,1.5);
     \draw (6.6,0) rectangle (10,1.5);

      \node at (6.8,1.9) { $L_{p^n}\left(\frac{1}{3}, \sqrt[3]{3} \right)$ };
        \node at (5.9,1) {using LLL};
        \node at (6,0.7) {or BKZ};
        \draw[->,color=black] (6.8,1.6) to[out=180,in=90] (6,1.2);

 \node at (8.5,1) {using LLL, BKZ or an}; 
  \node at (8.5,0.7) {enumeration algorithm};
  \draw[->,color=black] (6.8,1.6) to[out=-90,in=90] (8.5,1.2);
 
       \draw[->,color=vertforet] (2.1,2.5) to[out=-90,in=180] (3.3,2);
    \node[color=vertforet]  at (3.6,3) { $ L_{p^n}\left(\frac{1}{3}, \sqrt[3]{3\cdot  \left(1+ \frac{1}{3 \sqrt{c^{3}}}\right)}\right)$ using BKZ};
    
	   \draw[->,color=vertforet] (2.1,1.2) to[out=60,in=180] (5,1);       
       \draw[->,color=vertforet] (2.1,1.2) to[out=60,in=180] (4.5,0.7);
    \node[color=vertforet]  at (2.1,0.9) { $ L_{p^n}\left(\frac{1}{3}, \sqrt[3]{3} \right)$};
    \node[color=vertforet]  at (2.1,0.5) {using BKZ};
        
    \draw[->,color=purple] (5.5,2.4) to[out=-140,in=0] (5,2.2);
    \node[color=purple]  at (8.3,2.5) { $ L_{p^n}\left(\frac{1}{3}, \sqrt[3]{3\cdot  \left(1 +  \frac{\log 2}{c^{2}}\right)} \right)$ using LLL \cite{guillevic_descent}};
    \draw[decorate,decoration={brace,raise=0.1cm}] (6.5,-0.7) -- (3.3,-0.7) ;
    \node at (5,-1.1) {medium characteristic};
     \draw[decorate,decoration={brace,raise=0.1cm}] (10,-0.7) -- (6.7,-0.7) ;
        \node at (8.3,-1.1) {large characteristic};
\end{tikzpicture}

\caption{\label{fig:sublattices} \emph{Complexities for the individual logarithm phase for finite fields of even extension degrees, as a function of the characteristic $p = L_{p^n}(\alpha, c)$, in JLSV$_1$.}
The height of each rectangle represents the complexity of the individual logarithm step in the corresponding range. 
The red rectangle represents the complexity 
when $\alpha =1/2$ brought by \cite{guillevic_descent}, using LLL.
Using BKZ (in green) we are able to reduce the complexity
when $\alpha =1/2$ and to reach smaller characteristics.  
When  $1/2 <\alpha < 2/3$ (in grey) the complexities are equal regardless 
of LLL or BKZ. When $\alpha \geq 2/3$ (in orange), LLL, BKZ, or an enumeration algorithm give the same complexity.
}

\end{center}
\end{figure}
\begin{table}[h!]

\begin{tabular}{ |c|m{7em}|m{8em}|m{17em}| } 
\hline
& Method & Parameters in Algorithm~\ref{our_algorithm} & Interest \\
\hline
 \multirow{2}{4em}{Previous work} & Waterloo \cite{C:BlaMulVan84} & Not supported  & The only method working for prime extensions.\\
\cline{2-4}
 & \cite{guillevic_descent} & LLL-reduction \newline with $s=0$ & Best method in practice for small extension degrees. Ex: $n =4,6,10$. Section~\ref{sec:practical}. \\
\hline
\multirow{3}{4em}{Our work} & LLL-reduction on sublattices & LLL-reduction with $s > 0$ & Best method in practice when $n$ grows. Ex: $n \geq 16$. Section~\ref{sec:practical}.   \\ 
\cline{2-4}
& BKZ-reduction & BKZ-reduction with $s=0$ & Best asymptotic algorithm for medium characteristic finite fields. Section~\ref{subsec:best_comp}.\\
\cline{2-4}
& BKZ-reduction on sublattices & BKZ-reduction with $s> 0$ & Lower norms than BKZ with $s=0$, but no change is the asymptotic complexity. Section~\ref{subsec:BKZ_method_s}. \\ 
\cline{2-4}
& Enumeration \newline algorithm & Enumeration algorithm with $s=0$ & Best asymptotic algorithm for large characteristic finite fields. Section~\ref{subsec:BKZ_method_s}\\
\hline

\end{tabular}
\caption{\label{table:interests}Smoothing algorithms for medium and large characteristic finite fields.
The crossover point when LLL on sublattices becomes better than  \cite{guillevic_descent} depends on the size of the target: examples are given for $500$-bit size finite fields.}
\end{table}

In practice, we conduct experiments on finite fields
of size $500$~bits (the current TNFS record size), 
$700$~bits (a probably reachable size using TNFS),
$1024$~bits (a key-size that can still be found in the wild) and 
$2048$~bits (a relevant key-size).  The results of all four experiments together with the code to reproduce them
 are available at the git repository~\cite{Code:smoothness}.
Our method becomes more efficient as $d$ the
largest non trivial divisor of $n$ grows, being thus particularly suitable for even
extension degrees. The results are similar for all these
sizes because the effect mainly depends on $d$ (thus $n$), and for 
this reason we only detail in this paper
our experiments on $500$ and $2048$-bit finite fields,
for extension degrees $n$ that vary from $4$ to $50$.

For instance, with a $476$-bit target finite field 
$\F_{p^{34}}$, we can lower the dimension of the $34$-dimension
lattice by removing $5$ columns and rows.
Regular lift of a target $T$ in the number field gives  elements 
with a $789$-bit norm. Applying~\cite{guillevic_descent} would 
create $540$-bit candidates to test for smoothness, whereas our
algorithm using this smaller matrix returns $517$-bit candidates 
in the number field. Bear in mind that considering FFS instead of
NFS may be relevant when dealing with
large extension degrees, depending on the whole size of the finite field.
For instance, in~\cite{Mukhopadhyay2022}, a discrete logarithm computation
is performed in a 1051-bit field with extension degree~$50$ using FFS.
However, for our $2048$-bit finite fields, NFS and its variants would likely be the most efficient available algorithms to use since the characteristic sizes are sufficiently large.

\paragraph{Outline of the article.}
In Section~\ref{sec:NFS}, we give a short refresher on the Number Field Sieve
together with the background needed on lattice reductions.
Section~\ref{sec:smoothing} presents our algorithm to compute
a candidate with a smaller norm in the number field, for the initial
splitting step. Like \cite{guillevic_descent}, our algorithm works for TNFS, MTNFS and STNFS. Then in Section~\ref{sec:asymp} we focus on
the asymptotic complexity of the splitting step, if LLL is used
for the lattice reduction. Section~\ref{sec:BKZ} deals with the impact of replacing
LLL by BKZ. In particular Corollary~\ref{lower_complexity}
gives lower asymptotic complexities for the individual logarithm
phase.
Finally 
Section~\ref{sec:practical} is dedicated to our practical results on
460 to 500-bit and 2050 to 2080-bit
finite fields with composite extensions~$n$, up to $50$.

\section{Background}
\label{sec:NFS}
From now on, $\F_{p^n}$ is the target finite field, and $n = \eta \kappa$ is its composite
extension degree. Let $d$ be the largest divisor of $n$ strictly lower than~$n$.
Since the computation of a discrete logarithm in a group can be reduced to 
its computation in one of its prime subgroups by Pohlig-Hellman's reduction, 
we work modulo $\ell$, a non trivial prime divisor of $\Phi_n(p)$, with
$\Phi_n$ the $n$-th cyclotomic polynomial.
We start with a useful definition:
\begin{definition}
Let $x$ and $B$ be two positive integers. Then $x$ is said to be B-smooth if
all its prime divisors are lower than $B$.
\end{definition}

Let us give a short refresher on the Number Field Sieve and some details 
about its Tower variant. Both NFS and TNFS follow similar steps as any index calculus algorithm.

\subsection{The (Tower) Number Field Sieve}

\paragraph{Polynomial selection.}
TNFS selects three polynomials, namely $h$, $f_1$ and $f_2$ in~$\Z[x]$.
The polynomial $h$ must be of
degree~$\eta$ and irreducible modulo the characteristic~$p$. Let $\iota$
be a root of $h$. Then we have an intermediate number
field $\mathbb{Q}(\iota)$.
The polynomials $f_1$ and $f_2$ have degree at least
$\kappa$. 
Conditions on the polynomials $h$, $f_1$ and $f_2$ permit
to define  two ring homomorphisms from $\mathcal{R}[x] = \mathbb{Z}[\iota][x]$
to the target finite field $\mathbb{F}_{p^n}$ through the number fields
$K_{1} = \QQ(\iota)[x]/(f_1(x))$ and $K_{2} =
\QQ(\iota)[x]/(f_2(x))$. This yields a commutative diagram as shown
in Figure~\ref{fig:diagram}. 

The classical NFS works with an easier polynomial selection where
we only need $f_1$ and $f_2$. The relative commutative diagram
is the same as in~Figure~\ref{fig:diagram} but with  $\mathcal{R}[x] =\Z[x]$.
Several polynomial selections for NFS are possible, giving 
each one a pair $(f_1, f_2)$ of polynomials. The most important parameters
are the size of their coefficients and their respective degrees.
In practice polynomial selections such as Conjugation~\cite{EC:BGGM15}, 
JLSV$_1$~\cite{C:JLSV06} or Sarkar-Singh's~\cite{AC:SarSin16} methods can be adapted to the TNFS setting
to obtain three polynomials $h$, $f_1$ and $f_2$ as required.

\vspace{-0.5cm}
\begin{figure}
	\centering
\[
    \begin{tikzcd}[column sep=0em]
	\& \mathcal{R} \left[ x \right] 
	\arrow{ld}[above left]{x \mapsto \alpha_1}
	\arrow{rd}[above right]{x \mapsto \alpha_2} \&
	\\
	K_{1} \supset \mathcal{R} \left[ x \right] / (f_1(x))
            \arrow{rd}[below left]{ \bmod p, \bmod \varphi}
            \& \&\
        K_{2} \supset \mathcal{R} \left[ x \right] / (f_2(x))
            \arrow{ld}{ \bmod p, \bmod \varphi}\\
            \&\mathcal{R}/\mathfrak{p}[x]/(\varphi(x)) \cong \F _{p^{n}}\&
	\end{tikzcd}
	\]
		\caption{Commutative diagram of Tower NFS. Here 
		$\alpha_{i}$ is a root 
of~$f_i$  in $K_{i}$ for $i = 1,2$ and
		$\varphi$ is of degree $\eta$ and irreducible
modulo an ideal $\mathfrak{p}$ above $p$ in~$\mathcal{R}$.
It is a common factor modulo $p$ of $f_1$ and $f_2$.
		}
		\label{fig:diagram}
\end{figure}
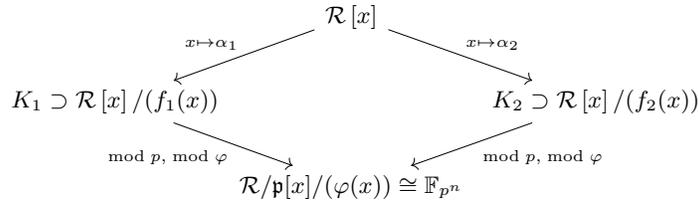
\vspace{-1cm}
\paragraph{Relation collection.}
The goal of the relation collection step is to select among a set of polynomials 
$\phi(x, \iota) \in \mathcal{R}[x]$ with a bounded degree
at the top of the diagram the candidates which produce a relation.
A relation is found if the polynomial~$\phi(x, \iota)$ mapped to $K_1$ and $K_2$ 
factors into products of prime ideals of small norms in both number fields.
The ideals of small norms that occur in these factorizations constitute the factor basis.  
To verify the $B$-smoothness on each side, one needs to evaluate the norms 
$\N_{i}(\sum_{k=0}^{\hbox{deg} f_i} a_k(\iota) (\alpha_i)^k )$ for $i = 1, 2$. 
Note that these norms are integers that can be computed thanks to resultants.
The relation collection step stops when we have enough 
relations to construct a system of linear equations that 
may be full rank. The unknowns of these equations are 
the \textit{virtual} logarithms of the ideals of the factor basis.
For the classical NFS, the relation collection is easier
and consists on the same idea, but working with univariate polynomials $\phi(x) \in \Z[x]$ instead of bivariate polynomials.

\paragraph{Sparse linear algebra.}
A good feature of the linear system created in both NFS and TNFS (there is no difference
for this step) is that the number of non-zero
coefficients per line is very small. Sparse 
linear algebra algorithms such as the block variant of Wiedemann's
algorithm~\cite{wiedemann} speeds up the computation. 
The output of the linear algebra phase is a kernel vector corresponding 
to the virtual logarithms of the ideals in the factor basis. 

\paragraph{Individual discrete logarithm.}

The final step of TNFS consists in finding the discrete logarithm of an arbitrary 
element~$T'$ in the target finite field, that we call the target element.
 This step is subdivided into two substeps: 
 an initial splitting step -- also called smoothing step -- and a descent step.
 The splitting step is an iterative process where $T'$ is first
 randomized by~$T = g^t T' \in \mathbb{F}_{p^n}^*$, where
 $g$ is a generator of~$ \mathbb{F}_{p^n}^*$ and for $t \in \Z$ chosen uniformly at random.
Values for $t$ are tested until $T$ lifted back to one 
of the number fields $K_i$ is $B_i$-smooth for a smoothness bound $B_i > B$.
We focus on this step in Section~\ref{sec:smoothing}. This step 
dominates the asymptotic complexity of the individual discrete
logarithm phase.

The second step consists in decomposing every factor of the lifted 
value of~$T$
into ideals of the factor basis for which we know the virtual logarithms. 
In our case these factors are prime ideals with norms less than a smoothness bound~$B$.
This process creates a descent tree where the root is the lift of $T$, a node is an
 ideal coming from 
the smoothing step and
the nodes below are ideals that get smaller and smaller as they go
deeper. The leaves are ultimately elements of the factor basis. 
The edges of the tree are defined as follows: 
for every node, there exists an equation between the ideal of the node and 
all the ideals of its children.

\subsection{Tools from lattice reduction algorithms}

\paragraph{SVP problem and enumeration algorithms:}
Given an Euclidean lattice~$\LL$, the SVP (Shortest Vector Problem) consists in finding the smallest non zero vector of the lattice. The best existing algorithms to solve it are exponential in the dimension of the lattice.
The family of enumeration algorithms are used in practice for small
dimension lattices. For instance the HKZ-reduction algorithm~\cite{SODA:MicWal15} finds the shortest vector of a lattice of dimension $n$ in time~$2^{O\left(n \log(n)\right)}$ and in a polynomial space complexity. 
There exists an older enumeration algorithm~\cite{STOC:MicVou10}  that is 
asymptotically faster with a time complexity in $O\left(2^{2n}\right)$,
 but the huge drawback is its exponential space complexity 
 in $O\left(2^n\right)$.

\paragraph{The LLL algorithm: }
To handle the difficulty of SVP, another problem 
was introduced, $\gamma$-SVP. This problem 
consists in looking for a short vector of the lattice, more precisely, if we denote by $\lambda_1$ the first minimum of the lattice, which is the
Euclidean norm of the shortest non zero vector of the lattice $\LL$, $\gamma$-SVP consists in 
finding $v \in \LL^*$ such that $|v| \leq \gamma \lambda_1$.

Lenstra, Lenstra, and Lovasz proposed in 1982 \cite{LLL} an algorithm that solves $\gamma$-SVP in 
polynomial time for a certain parameter $\gamma$. The algorithm takes as input a basis of $\LL$ and 
returns another basis of the lattice which has better properties, in particular the first vector of 
the returned basis is a solution to $\gamma$-SVP. Let us state some properties of the LLL algorithm:

\begin{theorem}
\label{LLL_bound}
Let $\LL$ be a lattice of dimension $n$. Let $\lambda_1$ be the first minimum of the lattice and $R$ the first vector returned by LLL when a basis of $\LL$ is given as input, then : 
\begin{itemize}
\item $\|R\|_{\infty} \leq \|R\|_2 \leq 2^{\frac{n-1}{2}} \lambda_1$.
\item $ \|R\|_{\infty} \leq \|R\|_2 \leq 2^{\frac{n - 1}{4}} \det \left( \LL \right)^{\frac{1}{n}}$.
\end{itemize}
\end{theorem}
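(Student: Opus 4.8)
The plan is to unwind the definition of an LLL-reduced basis and argue directly through the Gram--Schmidt orthogonalization. Write $b_1,\dots,b_n$ for the basis returned by LLL, so that $R = b_1$, and let $b_1^*,\dots,b_n^*$ be its Gram--Schmidt orthogonalization with coefficients $\mu_{i,j} = \langle b_i, b_j^*\rangle/\|b_j^*\|^2$. The two defining properties of an LLL-reduced basis (for the standard parameter $\delta = 3/4$) are size reduction, $|\mu_{i,j}| \le 1/2$ for $j < i$, and the Lov\'asz condition, $\|b_i^* + \mu_{i,i-1} b_{i-1}^*\|^2 \ge \tfrac34 \|b_{i-1}^*\|^2$. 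Since $b_i^*$ and $b_{i-1}^*$ are orthogonal, the left-hand side equals $\|b_i^*\|^2 + \mu_{i,i-1}^2\|b_{i-1}^*\|^2$, so the Lov\'asz condition yields $\|b_i^*\|^2 \ge (\tfrac34 - \mu_{i,i-1}^2)\|b_{i-1}^*\|^2 \ge \tfrac12 \|b_{i-1}^*\|^2$, and by induction $\|b_1^*\|^2 = \|b_1\|^2 \le 2^{\,i-1}\|b_i^*\|^2$ for every $i$.

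Next I would record the elementary fact that $\lambda_1 \ge \min_{1 \le i \le n} \|b_i^*\|$ for any basis of $\LL$: given a nonzero lattice vector $v = \sum_i z_i b_i$ with $z_i \in \Z$, let $k$ be the largest index with $z_k \neq 0$; since $b_k^*$ is orthogonal to $\Span(b_1,\dots,b_{k-1})$ we have $\langle v, b_k^*\rangle = z_k \|b_k^*\|^2$, and Cauchy--Schwarz gives $\|v\|\,\|b_k^*\| \ge |z_k|\,\|b_k^*\|^2$, hence $\|v\| \ge |z_k|\,\|b_k^*\| \ge \|b_k^*\| \ge \min_i \|b_i^*\|$. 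Applying this with $v$ a shortest vector and combining with the decay estimate above gives $\lambda_1^2 \ge \min_i \|b_i^*\|^2 \ge 2^{-(n-1)}\|b_1\|^2$, i.e. $\|R\|_2 \le 2^{(n-1)/2}\lambda_1$; the inequality $\|R\|_\infty \le \|R\|_2$ is just the trivial comparison of the two norms on $\RR^n$.

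For the determinant bound I would use $\det(\LL) = \prod_{i=1}^n \|b_i^*\|$. From $\|b_i^*\|^2 \ge 2^{-(i-1)}\|b_1\|^2$ we get $\det(\LL)^2 = \prod_{i=1}^n \|b_i^*\|^2 \ge \|b_1\|^{2n} \prod_{i=1}^n 2^{-(i-1)} = \|b_1\|^{2n}\, 2^{-n(n-1)/2}$, hence $\|b_1\|^n \le 2^{n(n-1)/4}\det(\LL)$, that is $\|R\|_2 = \|b_1\| \le 2^{(n-1)/4}\det(\LL)^{1/n}$, and once more $\|R\|_\infty \le \|R\|_2$.

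There is no serious obstacle here: the statement is the classical LLL guarantee and each step is a short computation. The only point requiring a little care is to make explicit which LLL parameter underlies the stated constants, since $2^{(n-1)/2}$ and $2^{(n-1)/4}$ correspond to the choice $\delta = 3/4$ (a general $\delta \in (1/4,1)$ would replace the base $2$ by $4/(4\delta - 1)$); throughout the paper we take the standard $\delta = 3/4$, which is exactly what the bounds of the theorem reflect.
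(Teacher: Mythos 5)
Your proof is correct and complete: it is the classical argument for the LLL guarantees (Gram--Schmidt decay from the Lov\'asz condition, the lower bound $\lambda_1 \ge \min_i \|b_i^*\|$, and the identity $\det(\LL) = \prod_i \|b_i^*\|$), with the constants correctly tied to the parameter $\delta = 3/4$. The paper itself offers no proof of this theorem --- it is stated as standard background and attributed to the original LLL paper --- so there is nothing to compare beyond noting that your derivation is exactly the textbook one the authors implicitly rely on.
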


Modifying some parameter inside LLL permits to obtain an upper bound:
 $\|R\|_2 \leq (4/3)^{\frac{n - 1}{4}} \det \left( \LL \right)^{\frac{1}{n}}$.
However setting $2$ or $4/3$  is negligible in the sequel.

\paragraph{The BKZ algorithm:}
The best approximation algorithm known in practice for large dimensions is the 
Blockwise-Korkine-Zolotarev (BKZ) algorithm, published by Schnorr and Euchner 
in 1994 \cite{DBLP:journals/mp/SchnorrE94}.
The Schnorr-Euchner's BKZ algorithm can be seen as a generalization of LLL where 
instead of considering pairs of vectors, one looks at blocks of projected vectors. 
BKZ thus 
has an additional parameter~$\beta \geq 2$
which corresponds to the considered size of block.  We denote by $\beta$-BKZ 
the algorithm 
BKZ when the integer $\beta$ is taken as parameter. 
As LLL, BKZ returns a new basis of the lattice~$\LL$ given as input,
in particular the first vector of the basis is a solution to the $\gamma$-SVP 
problem.
Roughly speaking, the higher $\beta$ is, the slower the algorithm and the 
better the output basis.
We shortly recap two theorems concerning BKZ, for further details please see~\cite{C:HanPujSte11,EC:MicWal16}.

\begin{theorem}
\label{BKZ-bound}Let $\LL$ be an Euclidean lattice of dimension $n$, and 
$R$ the first vector returned by $\beta$-BKZ applied on $\LL$, then:
\[
\|R\|_{\infty} \leq \|R\|_2 \leq 2\beta^{ \frac{n-1}{2(\beta-1)} + \frac{3}{2} } \ \det \left(\LL\right)^{\frac{1}{n}}
\]
\end{theorem}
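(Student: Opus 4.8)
The plan is to dispatch the left-hand inequality $\|R\|_{\infty}\le\|R\|_{2}$ immediately --- the $\ell^{\infty}$ norm of any vector is at most its Euclidean norm --- and then to work only with the Euclidean norm, writing $\|\cdot\|$ for $\|\cdot\|_{2}$. Let $(b_{1},\dots,b_{n})$ be the basis of $\LL$ returned by $\beta$-BKZ, so that $R=b_{1}$, and let $(b_{1}^{\star},\dots,b_{n}^{\star})$ be its Gram--Schmidt orthogonalisation; then $b_{1}^{\star}=b_{1}$, so $\|R\|=\|b_{1}^{\star}\|$, while $\det(\LL)=\prod_{i=1}^{n}\|b_{i}^{\star}\|$. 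For $1\le i\le n$ write $\pi_{i}$ for the orthogonal projection onto the orthogonal complement of $\Span(b_{1},\dots,b_{i-1})$, and for $i\le j$ let $\LL_{[i,j]}$ be the lattice of rank $j-i+1$ generated by $\pi_{i}(b_{i}),\dots,\pi_{i}(b_{j})$, whose Gram--Schmidt norms are precisely $\|b_{i}^{\star}\|,\dots,\|b_{j}^{\star}\|$. Recall that a $\beta$-BKZ reduced basis is, by definition, size-reduced and such that for every $i$ the vector $b_{i}^{\star}=\pi_{i}(b_{i})$ is a shortest nonzero vector of $\LL_{[i,\min(i+\beta-1,\,n)]}$.

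First I would feed this local optimality into Hermite's inequality: for a lattice $M$ of rank $m$ one has $\lambda_{1}(M)\le\sqrt{\gamma_{m}}\,\det(M)^{1/m}$, where $\gamma_{m}$ is Hermite's constant, and $\gamma_{m}\le\gamma_{\beta}$ for $m\le\beta$ since $m\mapsto\gamma_{m}$ is non-decreasing. Applied to $M=\LL_{[i,j]}$ with $j=\min(i+\beta-1,\,n)$, using $\det(\LL_{[i,j]})=\prod_{k=i}^{j}\|b_{k}^{\star}\|$ and $\lambda_{1}(\LL_{[i,j]})=\|b_{i}^{\star}\|$, this yields for every $i$
\[
\|b_{i}^{\star}\|^{\,j-i+1}\;\le\;\gamma_{\beta}^{\,(j-i+1)/2}\prod_{k=i}^{j}\|b_{k}^{\star}\|.
\]
Next I would chain these relations over the consecutive blocks based at $i=1,\,1+(\beta-1),\,1+2(\beta-1),\dots$; reducing first to the case $(\beta-1)\mid(n-1)$ (the leftover partial block costs only a bounded multiplicative factor), a telescoping argument on the tail products $\prod_{k\ge i}\|b_{k}^{\star}\|$ bounds $\|b_{1}^{\star}\|$ by a power of $\gamma_{\beta}$ with main exponent $\tfrac{n-1}{2(\beta-1)}$ times $\bigl(\prod_{k=1}^{n}\|b_{k}^{\star}\|\bigr)^{1/n}=\det(\LL)^{1/n}$. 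The remaining subtlety is that $\beta$-BKZ is run for only polynomially many tours, so on output the basis is $\beta$-BKZ reduced merely up to a small relaxation; propagating this relaxation through the chaining, together with the contribution of the last, possibly partial, block, is exactly what produces the multiplicative constant $2$ and the additive $\tfrac{3}{2}$ in the exponent. This is the content of the terminated-BKZ analysis of Hanrot--Pujol--Stehl\'{e}~\cite{C:HanPujSte11} (see also~\cite{EC:MicWal16}), which gives
\[
\|R\|\;=\;\|b_{1}\|\;\le\;2\,\gamma_{\beta}^{\frac{n-1}{2(\beta-1)}+\frac{3}{2}}\;\det(\LL)^{1/n}.
\]

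To conclude I would bound the Hermite constant: the Minkowski bound gives $\gamma_{\beta}\le 1+\beta/4$, and $1+\beta/4\le\beta$ whenever $\beta\ge 4/3$, hence in particular for every $\beta\ge 2$, which is the only relevant range. Substituting $\gamma_{\beta}\le\beta$ into the last display gives $\|R\|_{\infty}\le\|R\|_{2}\le 2\beta^{\frac{n-1}{2(\beta-1)}+\frac{3}{2}}\det(\LL)^{1/n}$, as claimed. The hard part will be the chaining step together with the careful accounting of the relaxation coming from early termination; the rest is bookkeeping, and for that part I would simply invoke~\cite{C:HanPujSte11,EC:MicWal16} rather than redo it. The only genuinely new ingredient beyond those references is the elementary estimate $\gamma_{\beta}\le\beta$.
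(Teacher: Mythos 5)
Your proposal is correct and matches the paper's treatment: the paper gives no proof of Theorem~\ref{BKZ-bound} and simply refers to~\cite{C:HanPujSte11,EC:MicWal16}, which is exactly where you send the reader for the chaining and terminated-BKZ analysis that produces the factor $2$ and the $+\tfrac{3}{2}$ in the exponent. Your only added ingredient, the elementary bound $\gamma_{\beta}\le 1+\beta/4\le\beta$ converting the Hermite-constant version of the bound into the stated one, is sound (and in fact lets you bypass the monotonicity of $m\mapsto\gamma_{m}$ entirely).
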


\begin{theorem}
\label{complexity_of_BKZ}
The complexity of $\beta$-BKZ on an Euclidean lattice $\LL$ of dimension~$n$ is: 
$$
\hbox{Poly} \left(n, \hbox{size}(\LL)\right) 2^{O(\beta)}
$$
where $\hbox{size}(\LL)$ is the sum of logarithms of absolute values of the coordinates of the matrix representing $\LL$, and $\hbox{Poly}\left(n, \hbox{size}(\LL)\right)$ denotes a polynomial function in $n$ and $\hbox{size}(\LL)$.
\end{theorem}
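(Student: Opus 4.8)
The plan is to analyse $\beta$-BKZ as an outer loop of \emph{tours}, bound the cost of one tour by a polynomial number of calls to an SVP oracle on lattices of dimension at most $\beta$, and bound the number of tours polynomially. Recall that one tour runs, for each index $i$ from $1$ to $n-1$, an SVP-oracle call on the projected block lattice spanned by the Gram--Schmidt projections of $b_i,\dots,b_{\min(i+\beta-1,\,n)}$, a lattice of dimension at most $\beta$, then inserts the short vector returned and re-runs a local LLL to restore a size-reduced basis in triangular form; BKZ also begins with one global LLL call. So a tour costs at most $n$ SVP calls in dimension $\le \beta$ plus $\hbox{Poly}\bigl(n,\hbox{size}(\LL)\bigr)$ bit operations for the LLL clean-ups, \emph{provided} the bit-lengths of all manipulated integers stay polynomially bounded.

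First I would establish this size control: exactly as in the polynomial-time analysis of LLL~\cite{LLL}, the interleaved size-reductions keep every basis vector and every (scaled) Gram--Schmidt quantity of bit-length $\hbox{Poly}\bigl(n,\hbox{size}(\LL)\bigr)$ throughout the entire execution; in particular each SVP-oracle call is on a lattice of dimension $\le \beta$ whose representation has size $\hbox{Poly}\bigl(n,\hbox{size}(\LL)\bigr)$. Instantiating the oracle with the best known SVP solver — running in time $2^{O(\beta)}$ times a polynomial in its input size, e.g.\ the algorithm of~\cite{STOC:MicVou10} already recalled above — one SVP call, hence one tour, costs $\hbox{Poly}\bigl(n,\hbox{size}(\LL)\bigr)\cdot 2^{O(\beta)}$.

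Second I would bound the number of tours. Since plain Schnorr--Euchner BKZ~\cite{DBLP:journals/mp/SchnorrE94} is not known to terminate in polynomial time, I would work with its terminating (early-aborted) variant analysed by Hanrot, Pujol and Stehl\'e~\cite{C:HanPujSte11} (see also~\cite{EC:MicWal16}): their dynamical-systems argument shows that after $\hbox{Poly}(n)\cdot\log\bigl(\hbox{size}(\LL)\bigr)$ tours the basis already satisfies the quality guarantee of Theorem~\ref{BKZ-bound}, so the loop is stopped there. Alternatively one runs the classical potential argument: every block operation that modifies the basis strictly decreases the positive integer potential $\prod_{i=1}^{n}\|b_i^*\|^{2(n-i+1)}$ by a factor bounded away from $1$, and this potential is at most $2^{\hbox{Poly}(n,\hbox{size}(\LL))}$, so there are only $\hbox{Poly}\bigl(n,\hbox{size}(\LL)\bigr)$ productive operations, with at most one full tour ($n$ calls) between two consecutive ones. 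Multiplying the number of tours by the per-tour cost yields $\hbox{Poly}\bigl(n,\hbox{size}(\LL)\bigr)\cdot 2^{O(\beta)}$, as claimed.

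The main obstacle is the size-control step: one must check that inserting the oracle's output and re-triangularising with LLL does not blow up the bit-lengths of the basis — the same delicate bookkeeping underlying the polynomial-time analysis of LLL, now to be maintained across all tours — and, in tandem, that the tour-count bound invoked is for a BKZ variant whose output still meets Theorem~\ref{BKZ-bound}, which is precisely why one works with the terminating/early-aborted algorithm rather than the original Schnorr--Euchner one.
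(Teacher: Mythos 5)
The paper does not actually prove this theorem: it is recalled from the literature with a pointer to \cite{C:HanPujSte11,EC:MicWal16}, and your sketch correctly reconstructs the standard argument those references give — tours of at most $n$ SVP-oracle calls in dimension $\le\beta$, a $2^{O(\beta)}$-time SVP solver, LLL-style control of bit-lengths, and the early-aborted (terminating) BKZ of Hanrot--Pujol--Stehl\'e to bound the number of tours polynomially while retaining the quality guarantee of Theorem~\ref{BKZ-bound}. Your primary route is therefore sound and is exactly the one the paper implicitly relies on. One caution on your ``alternative'' potential argument: for plain Schnorr--Euchner BKZ an insertion is performed whenever the projected shortest vector is \emph{strictly} shorter, so the integer potential need only drop by $1$ rather than by a multiplicative factor bounded away from $1$; this is precisely why the best known tour bound for the unmodified algorithm is exponential and why the early-aborted variant (or a $\delta$-relaxed insertion condition) is not optional but necessary for the stated complexity.
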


\section{Splitting step with a smaller lattice}
\label{sec:smoothing}

In this section we study the splitting step in large and medium characteristic finite 
fields of composite extension degrees. 
For the sake of simplicity, we detail our algorithm with the classical
NFS setting, namely we consider the morphism $\mathcal{K}_i = \Q[X] / (f_i(X)) \rightarrow
\F_{p^n} \cong \F_p[X] / (\varphi(X))$. A preimage of an element $S$ in the finite field
through this morphism is called a lift, and is written $\overline S$.
We work with the classical setting because
it is easier to compare our result with~\cite{guillevic_descent} that
is written for NFS, 
but in the TNFS setting the whole algorithm works in the same way.
The goal is to improve the smoothness 
probability of the lift of $T \in \mathbb{F}_{p^n}^*$ to~$\mathcal{K}_i$ 
by constructing an adequate lattice whose reduced vectors 
define elements of $\mathcal{K}_i$ with potentially small norms, which 
are precisely the potential lifts of $T$ we are looking for.

\subsection{Splitting step with proper subfields}
\label{subsec:Guillevic}

The aim of the splitting step is to compute the
discrete logarithm of a target~$T'$ in a finite field. The
discrete logarithm is computed modulo~$\ell$, a given and precomputed integer.
The key idea of the algorithm is to replace the target $T'$ 
 by another element $T$ so that: 
\begin{enumerate}
\item $\log(T') \equiv \log(T) - t \mod \ell$ for a random known $t$.
\item the norm of the lift of $T$ in one of the number fields $K_1$ or $K_2$
 is $B$-smooth, for some predefined smoothness bound $B$ that
 is usually larger than the bound for the factor base.
\end{enumerate}
In the sequel we simply note $\K$ the number field that is chosen
to be the one where we lift the elements. We note $f$ the polynomial
defining $\K$. 

To do so, \cite{guillevic_descent} creates a lattice in $\K$ so that,
for any element $\bar S$ in the lattice its image $S$ in the finite field verifies 
the first item above, namely
$ \log(T) \equiv \log(S) + t \mod \ell$.
Performing a  lattice reduction on this set, Guillevic is then able to
produce a number field element $R$ with a small norm. 
This procedure is done over and over on $g^t T'$ 
where $t$ is chosen randomly in $[ 1, l-1 ]$ 
until the algorithm outputs an element $R$ that verifies the second item, namely 
such that its norm  is $B$-smooth.

\paragraph{Construction of a lattice thanks to proper subfields elements.}
The construction is based on the existence of elements in the finite field
for which we can deduce in advance that they have the same discrete
logarithm as the target element modulu $\ell$, a prime divisor of the multiplicative group order. Such elements are found thanks to the following lemma:
\begin{lemma}[From \cite{AC:Guillevic15}, Lemma 2]
Let $\ell$ be a divisor of $\phi_n(p)$. Let $U \in \F_{p^n}^*$ be an element that lies in a proper subfield of $\F_{p^n}$, then $\log(U) \equiv 0 \mod \ell$. 
\label{cruciallemma}
\end{lemma}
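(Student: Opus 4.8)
The plan is to reduce the statement about discrete logarithms to a divisibility between multiplicative orders, and then to finish with the cyclotomic factorization of $p^n-1$. First I would fix a generator $g$ of the cyclic group $\F_{p^n}^*$, whose order is $N=p^n-1$, and write $U=g^x$, so that by definition $\log(U)\equiv x \bmod \ell$. Since $\ell\mid\Phi_n(p)$ and $\Phi_n(p)\mid p^n-1$, the integer $N/\ell$ is well defined, and a short computation with $\gcd$'s (using $\mathrm{ord}(U)=N/\gcd(N,x)$ and $\ell\mid N$) shows that $\ell\mid x$ if and only if $\mathrm{ord}(U)\mid N/\ell$. Hence it suffices to prove that $\mathrm{ord}(U)$ divides $(p^n-1)/\ell$.

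Next I would exploit the hypothesis that $U$ lies in a proper subfield: there is a divisor $m$ of $n$ with $m<n$ and $U\in\F_{p^m}^*$. Since the multiplicative group $\F_{p^m}^*$ has order $p^m-1$, we get $U^{\,p^m-1}=1$, hence $\mathrm{ord}(U)\mid p^m-1$. It is therefore enough to show that $(p^m-1)\mid (p^n-1)/\ell$, equivalently $\ell\cdot(p^m-1)\mid p^n-1$, equivalently $\ell\mid \frac{p^n-1}{p^m-1}$.

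Finally I would invoke the identity $X^n-1=\prod_{k\mid n}\Phi_k(X)$, evaluated at $X=p$, where every factor $\Phi_k(p)$ is a positive integer. Dividing by $p^m-1=\prod_{k\mid m}\Phi_k(p)$ gives $\frac{p^n-1}{p^m-1}=\prod_{k\mid n,\ k\nmid m}\Phi_k(p)$. Because $m<n$ we have $n\nmid m$, so the factor $\Phi_n(p)$ occurs in this product; therefore $\Phi_n(p)\mid \frac{p^n-1}{p^m-1}$, and a fortiori $\ell\mid \frac{p^n-1}{p^m-1}$, which is exactly the divisibility that remained to be shown. Chaining the three steps back together yields $\log(U)\equiv 0\bmod\ell$.

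There is no deep obstacle here; the only point requiring a little care is the opening reduction, i.e.\ making precise in terms of $\gcd(x,p^n-1)$ the equivalence between ``$\ell\mid\log(U)$'' and ``$\mathrm{ord}(U)\mid(p^n-1)/\ell$'', together with checking that the divisibilities used above are genuine divisibilities of integers (all cyclotomic values $\Phi_k(p)$ being positive integers, and $\ell\mid\Phi_n(p)\mid p^n-1$). The cyclotomic factorization does the real work and is completely standard.
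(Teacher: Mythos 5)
Your proof is correct. Note that the paper itself does not prove this lemma but imports it verbatim from Guillevic's work (\cite{AC:Guillevic15}, Lemma 2); your argument --- reduce $\ell \mid \log(U)$ to $\mathrm{ord}(U)\mid (p^n-1)/\ell$, use $\mathrm{ord}(U)\mid p^m-1$ for the subfield $\F_{p^m}$, and conclude from $\Phi_n(p)\mid \frac{p^n-1}{p^m-1}$ via the factorization $X^n-1=\prod_{k\mid n}\Phi_k(X)$ --- is the standard one underlying the cited result, so there is nothing to flag.
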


In order to construct the lattice, exhibiting an element in a proper subfield is sufficient.
Indeed, let's compute once and for all: 
\[U = g^{ \frac{p^n - 1}{p^d -1} },\] 
where  $g$ denotes a generator of the multiplicative group of $\F_{p^n}^*$ of 
order~$\ell$, and $d$ is the largest proper divisor of $n$.
Then $\{ 1, U, \dots, U^{d-1} \}$ is an $\F_p$-basis of $\F_{p^d}$. 
In particular this is done
before randomizing $T'$ as $T = g^t T'$, 
for an integer $t \in [0, \dots \ell-1 ]$.  $T$ becomes
the temporary new target.
The following elements $\{T, UT, \dots, U^{d-1}T \}$ are  $\F_p$-independent 
and every element~$R$ of the $\F_p$-vector-space spanned by 
the~$d$~previous elements verifies $\log(R) \equiv \log(T) \mod \ell$. 
Indeed, an $\F_p$-combination can be written 
as $R = \left(a_0 + a_1 U + \dots + a_{d-1} U^{d-1}\right) T$.
On the one hand $R=0$ if and only if it is the trivial combination, and on the other hand, since $a_0 + a_1 U + \dots +a_{d-1} U^{d-1} \in \F_{p^d}^*$, applying Lemma~\ref{cruciallemma} we get the desired equality.
Thus $\{T, UT, \dots, U^{d-1}T \}$ are sent to the number field~$\K$ in which they form a lattice over $\ZZ$.
Applying LLL  to it allows to find a short vector in the lattice 
that corresponds to an element $\overline{R}$ in the number field with small norm,
and such that its image~$R$  in the finite field has the same logarithm modulo~$\ell$
 as $T$.
If the norm of $\overline R$ is $B$-smooth for a predefined  bound~$B$, 
then the algorithm returns $t$ and $R$, that becomes the new target for the descent tree. Otherwise, one starts over with a new~$t$ 
 until a $B$-smooth element is found.

\subsection{Sublattices for smaller norms in the number field}
\label{subsec:methods}
The main idea presented in~\cite{guillevic_descent} for the splitting 
step in large and medium characteristic finite fields 
is to substitute the target by another one that has smaller coefficients. 
In small characteristic finite fields of composite extension 
degree, \cite{DBLP:journals/ffa/MukhopadhyayS20} replaces the target by another 
one with a smaller degree. The method we propose allows the advantages
of both worlds, supplanting the target with candidates with smaller coefficients
and smaller degrees. The key ingredient is to consider sublattices of 
the initial one.
We study the splitting step for a number field~$\K$ defined by a degree~$n$ polynomial~$f$.
The presentation is easier this way, and this matches all the polynomial selections where
at least one of the polynomial is of degree $n$. We explain in Paragraph~\ref{subsec:MTNFSandSTNFS} how to 
adapt our work  to a more general case where $\deg(f) \geq n$.

\paragraph{Description of our algorithm when $\hbox{deg}(f)=n$.}
Algorithm~\ref{our_algorithm} details our method and an implementation of Algorithm~\ref{our_algorithm} is available at \cite{Code:smoothness}. The idea consists of computing
an element~$U$ of a proper subfield to construct a lattice~$M$ of elements that
all have the same discrete logarithm as a randomized target~$T$. After a Gauss reduction on the matrix, we send its coefficients to $\ZZ$ and complete it in a square matrix~$\LL$ 
by adding elements multiple of $p$. Our algorithm
differs from~\cite{guillevic_descent} at this step: We do not apply
a reduction algorithm on the full matrix~$\LL$  but consider instead
a sublattice~$\LL'$ of~$\LL$ with a smaller dimension. 
$\LL'$ is
constructed from~$\LL$ by deleting $s$ specific 
rows and columns, with $s$~an integer
in~$\in [ 0, \ d-2 ]$ that is defined beforehand.
Applying a lattice reduction algorithm on~$\LL'$, we get a $(n-s)$-dimensional vector $(r_0, \cdots, r_{n-s-1})$ of~$\LL'$, 
and we create a candidate in $\mathcal{K}_i: \overline R = \sum_{k=0}^{n-s-1} r_k \alpha^k$,
with $\alpha$ a root of $f$.

Paragraph~\ref{subsec:setting_s} details how to set $s$. Note that
if $s=0$ then our algorithm is actually Guillevic's algorithm.
When $s>0$, the improvement  comes from
the reduction of the dimension of the vectors that are
given by the lattice reduction algorithm. Since $\LL'$ is of dimension $n-s$
instead of $n$,
the elements of the number field~$\overline R$ that are constructed are
of degree at most $n-s-1$, instead of $n-1$. Proposition~\ref{proof-of-correctness} proves the correctness of Algorithm~\ref{our_algorithm}.

\begin{algorithm}[H]
    \caption{Splitting step with sublattices for the individual logarithm in composite extension degree finite fields}
    
    \begin{algorithmic}
    \label{our_algorithm}
    \STATE \textbf{Input: } A finite field $\F_{p^n} = \F_p[X]/(\varphi)$, $n$ non prime.\\
    A lattice reduction algorithm: LLL, BKZ or an enumeration algorithm.  \\
    $\ell$ a prime divisor of $\Phi_n(p)$,
    \\
    $s \in [ 0, \ d-2 ]$.
    \\
    $\K = \Q[X]/(f)$ a number field  over $\F_{p^n}$ with $\N$ the corresponding norm.
    \\
    $v : \K \rightarrow \F_{p^n}$ a projection.
    \\
    $g$ a generator of $\F_{p^n}^*$.
    \\
    $T' \in \F_{p^n}$ the target
    \\
    $B$ a smoothness bound
    
    \STATE \textbf{Output: }$t \in [ 1, \ \ell-1 ]$, $\overline R \in \K$ such that $\log_g(v(\overline R)) \equiv t + \log_g(T') \mod \ell$, and $\N(\overline R)$ is $B$-smooth. 
    
    \begin{enumerate}
    \STATE $d \leftarrow$ the largest proper divisor of $n$.
    \STATE Compute $U = g^{ \frac{p^n - 1}{p^d - 1} }$, then $\{1, U, \dots, U^{d-1}\}$. It is an 
    $\F_p$-basis of $\F_{p^d}$.
    \STATE \textbf{Repeat:}
    \begin{enumerate}
    \STATE Choose $t \in [ 1, \ l-1 ]$ randomly.
    \STATE Compute $T = g^t T' \in \F_{p^{n}}$.
    \STATE Construct the following $d \times n$ matrix: 
    $M = 
        \begin{pmatrix}
        T \\
        U T\\\
        U^2 T \\
        \vdots \\
        U^{d-1} T
        \end{pmatrix}$ 
        
    \STATE Apply Gauss reduction to $M$ to obtain the matrix: 
    \[M_G = \begin{pmatrix}
        e_{00} & e_{01} & e_{02} & \dots & 1 &  \\
        e_{10} & e_{11} & e_{12} & \dots & * & 1 &  \\
        &&\vdots& \\
        e_{d-1 \ 0} & e_{d-1 \ 1} & e_{d-1 \ 2} & \dots & \dots & \dots & \dots & \dots& 1  
    \end{pmatrix}\]
    \STATE Send the matrix to $\Z$ and add $n-d$ rows as follows to obtain the following $n \times n$ square matrix: $\LL = \begin{pmatrix}
        p  \\
        & p \\
        &&\ddots\\
         &  &  & p  \\
        \overline{e_{00}} & \overline{e_{01}} & \overline{e_{02}} & \dots & 1  \\
        \overline{e_{10}} & \overline{e_{11}} & \overline{e_{12}} & \dots & * & 1 \\
        &&&&&&\ddots \\
        \overline{e_{d-1 \ 0}} & \overline{e_{d-1 \ 1}} & \overline{e_{d-1 \ 2}} & \dots & \dots & \dots & \dots & 1  
        \end{pmatrix}$
    \STATE  Delete the last $s$ rows and columns of $\LL$ to obtain the $(n-s) \times (n-s)$ matrix:
    $\LL' = \begin{pmatrix}
        p  \\
         &p \\
        &&\ddots\\
         &  &  & p \\
        \overline{e_{00}} & \overline{e_{01}} & \overline{e_{02}} & \dots & 1 \\
        \overline{e_{10}} & \overline{e_{11}} & \overline{e_{12}} & \dots & * & 1  \\
        &&&&&&\ddots \\
        \overline{e_{d-s-1 \ 0}} & \overline{e_{d-s-1 \ 1}} & \overline{e_{d-s-1 \ 2}} & \dots & \dots & \dots & \dots & 1  
        \end{pmatrix}$
    \STATE Apply a reduction algorithm such as LLL, BKZ, or an enumeration algorithm to $\LL'$.
    \STATE $\bar R \in \K \leftarrow$ the shortest vector returned by LLL, BKZ, or the enumeration algorithm.
    \end{enumerate}
    
    \STATE Until $\N( \bar R)$ is $B$-smooth.
    \STATE Return $t$, $\bar R$.
    
\end{enumerate}
\end{algorithmic}
\end{algorithm}

\begin{proposition}[Proof of correctness of Algorithm~\ref{our_algorithm}]
\label{proof-of-correctness}
Let $(t, \overline R)$ denote the output of Algorithm~\ref{our_algorithm} for the input $T' \in \F_{p^n}$. Define $R$ as the projection of~$\overline R$ in $\F_{p^n}$, and $T = g^t T'$ as in Algorithm~\ref{our_algorithm}. Then $\log_g(R) \equiv \log_g(T) \mod \ell$.
\end{proposition}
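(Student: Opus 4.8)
The plan is to trace an arbitrary vector of the sublattice $\LL'$ back through the construction and check that its image in $\F_{p^n}$ lands in the right coset, so that Lemma~\ref{cruciallemma} applies. First I would observe that every row of the matrix $M$ is, by construction, an element of the shape $U^j T$ with $0 \le j \le d-1$, hence an element of the $\F_p$-vector space $V = \F_{p^d}^* \cdot T \cup \{0\}$. Gauss reduction over $\F_p$ only takes $\F_p$-linear combinations of these rows, so every row of $M_G$ also represents an element of $V$; in particular the rows of $M_G$ are of the form $w \cdot T$ with $w \in \F_{p^d}$. Then I would note that sending the entries to $\Z$ and adjoining the rows $p\,e_i$ (the top block of $\LL$) does not change the image in $\F_{p^n}$: the lift $\overline{e_{ij}} \in \Z$ of an entry $e_{ij} \in \F_p$ maps back to $e_{ij}$ under $v$ (reduction mod $p$ composed with the fixed identification $\F_{p^n} \cong \F_p[X]/(\varphi)$), and the extra rows map to $0$ because their entries are multiples of $p$.

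Next I would handle the passage to the sublattice. Deleting the last $s$ rows and columns of $\LL$ just throws away some generators and restricts to polynomials of degree at most $n-s-1$; it does not introduce anything new. So any vector $(r_0,\dots,r_{n-s-1})$ of $\LL'$, viewed in $\K$ as $\overline R = \sum_{k=0}^{n-s-1} r_k \alpha^k$, is a $\Z$-linear combination of the surviving rows of $\LL$. By the previous paragraph, applying $v$ to such a combination yields the corresponding $\F_p$-linear combination of the images of those rows, i.e. an element of the form $\bigl(\sum_j c_j U^j\bigr) T$ with $c_j \in \F_p$ — that is, $R = v(\overline R) = w T$ for some $w \in \F_{p^d}$ (here I use that $v$ is a ring homomorphism, so it is additive and commutes with the multiplication by the integer lifts). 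The only subtlety is that $w$ could a priori be $0$; but in the run that produces the output, $\overline R$ is the vector returned by the reduction algorithm, which is a nonzero lattice vector, and since the top $p$-block rows span exactly the kernel of $v$ restricted to the degree-$<n-s$ part... actually it is cleaner to argue: if $w = 0$ then $R = 0$, and then $\N(\overline R)$ is a multiple of $p$, and one may simply discard this $t$ — or, more to the point, the statement $\log_g(R) \equiv \log_g(T)$ is only asserted for the genuine output, and a degenerate $R=0$ is not a valid target, so in practice the loop rejects it. I would phrase the proposition's conclusion as applying to the nondegenerate case and note that $R = 0$ occurs with negligible probability and is handled by re-randomizing.

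Granting $w \in \F_{p^d}^* = \F_{p^d}^* \subset \F_{p^n}^*$, Lemma~\ref{cruciallemma} gives $\log_g(w) \equiv 0 \bmod \ell$ since $w$ lies in the proper subfield $\F_{p^d}$ and $\ell \mid \Phi_n(p) \mid p^n-1$ with $\ell \nmid p^d - 1$ (this divisibility is exactly the hypothesis under which $U$ has order coprime to... more precisely $\ell \mid \Phi_n(p)$ and $\Phi_n(p)$ is coprime to $p^d-1$ for $d \mid n$, $d < n$, up to small prime factors, which is why one picks $\ell$ as in Section~\ref{sec:NFS}). Therefore $\log_g(R) = \log_g(w) + \log_g(T) \equiv \log_g(T) \bmod \ell$, which is the claim.

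The main obstacle is not any single computation but keeping the bookkeeping honest across the three representation changes ($\F_p$-rows of $M$, then Gauss-reduced $\F_p$-rows of $M_G$, then integer rows of $\LL$, then the truncated $\LL'$) and making precise the claim that "$v$ applied to an integer combination of the rows of $\LL$ equals the $\F_p$-combination of their images in $\F_{p^n}$." This hinges on $v$ being a ring homomorphism that reduces coefficients mod $p$, so that the $p$-block rows are killed and the rest descend to $\F_p$-linear combinations; once that is stated cleanly, the rest is immediate from Lemma~\ref{cruciallemma}. A secondary, minor point to address explicitly is the $R=0$ edge case, which I would dispatch in one sentence as above.
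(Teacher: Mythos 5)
Your proposal is correct and follows essentially the same route as the paper's proof: reduce to showing that the output vector of $\LL'$, padded with $s$ zeros, lies in $\Span\{\LL\}$, then invoke the fact that integer combinations of the rows of $\LL$ project to $\F_p$-combinations $uT$ with $u \in \F_{p^d}$ and conclude by Lemma~\ref{cruciallemma}. The one point you should make explicit is that deleting the last $s$ columns ``introduces nothing new'' precisely because $\LL$ is lower triangular, so the surviving rows already end in $s$ zeros --- this is the computation the paper actually spells out --- while your handling of the $R=0$ edge case is a small but legitimate addition that the paper's proof silently skips.
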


\begin{proof}
We adopt the same notations as in Algorithm~\ref{our_algorithm}. We represent $\K = \Q[X]/(f)$ as $\Q(\alpha)$. 
Thus for any integer $0<\mu \leq \deg(f)$, a $\mu$-dimensional vector representation 
of an element in $\K$ is relative to the independent family $\{1, \alpha, \alpha^2, \dots, \alpha^{\mu-1}\}$.
It is sufficient to prove that $\bar R$ is represented by a vector that is in the lattice spanned by 
the rows of $\LL$: $\Span\{\LL\}$. 
Indeed, as stated in \cite{guillevic_descent}, any integer linear combination of the rows of $\LL$ 
represents an element that once mapped to $\F_{p^n}$ is represented as an $\F_p$-combination
of the rows of $M$. It can be written as $uT$ where $u \in \F_{p^d}^*$, and thus its logarithm
is equal to $\log(T) \mod \ell$ by Lemma~\ref{cruciallemma}.

It remains to prove that $\bar R$ admits a vectorial representation that is an element of the lattice $\Span\{\LL\}$.
Let $0 \leq s \leq d-2$ as in the input of Algorithm~\ref{our_algorithm}. 
Any vector in $\Span\{\LL'\}$ is a representation of an element in $\K$ in the independent 
family $\{1, \alpha, \dots, \alpha^{n-s-1}\}$. Let 
$\bar R := r_0 + r_1 \alpha + \dots + r_{n-s-1} \alpha^{n-s-1}$ and
 $v' = (r_0, r_1, \dots, r_{n-s-1}) \in \Span\{\LL'\}$ its vectorial representation. 
 Since $v'$ is an integer linear combination of the rows of $\LL'$, we can write 
 $v' = \sum_{i=1}^{n-s} a_i L_i'$ where $L_i'$ denotes the $i$-th row of $\LL'$ and $a_i$ an integer for $0 \leq i \leq n-s$.

Similarly denote by $L_i$ the $i$-th row of $\LL$. For each $0 \leq i \leq n-s$, $L_i$ is the 
concatenation of $L_i'$ followed by $s$ zeros. Indeed, $\mathcal{L}$ is a lower triangular matrix, hence, after deleting the $s$ last rows of $\LL$, 
the $s$ last columns are all zeros. 
Thus the $n$ dimensional vector $v := \sum_{i=1}^{n-s} a_i L_i$ is equal to 
$v = (r_0, r_1, \dots, r_{n-s-1}, 0, \dots 0)$, and $v$ is in the lattice $\Span\{\LL\}$.
This concludes the proof since $v$ is a vector representation of the element 
$\bar R$: $\bar R = r_0 + r_1 \alpha + \dots r_{n-s-1} \alpha^{n-s-1} + 0 \alpha^{n-s+1} + \dots + 0 \alpha^{n-1}$.
\end{proof}  

\paragraph{Euclidean norms versus norms in the number field.}
Looking at sublattices
of a given lattice to find shorter norms might seem counterintuitive:
indeed, since smaller coefficients for a given vector~$v$ imply a smaller norm in
the number field for the related element constructed with $v$, our aim is to find a short vector
of~$\LL$. Considering a
sublattice~$\LL'$ may thus result in missing very short vectors that
 live in $\LL \setminus \LL'$. Indeed we run the risk of loosing the 
smallest vectors of the lattice and thus outputting an element with a greater 
Euclidean norm. However, the subtlety lies in the difference between the
 Euclidean norm (or the infinity norm) and the norm~$\N$ 
 defined over the number field~$\K$:
whereas $\N$  is sensible 
to the coefficients size \emph{and} to the degree of the polynomial, 
the Euclidean norm and the infinity norm are sensible to the coefficients sizes only.

For instance,  if $P_1 = 1 + \alpha + 3 \alpha^2$ and $P_2 = 1 + \alpha + 3 \alpha^{50}$ 
are elements of the number field $\K$, then $P_1$ and $P_2$ have both the same Euclidean and infinity norms, but $\N(P_2)$ should be much greater than $\N(P_1)$.
In practice, for all the experiments we run in Section~\ref{sec:practical},
we see that our sublattices don't give shorter vectors than the original full dimension
lattice~$\LL$. However the elements in the number fields that are constructed from
the output vectors benefit from the large number of zero coefficients at the end,
meaning a decrease in the degree, that leads to lower the norms when $n$ is large, 
as we observe.

Thus by considering a sublattice we try to balance two quantities:
we accept slightly greater coefficients and ask in return for a smaller degree. 
As a result, our algorithm returns lifted elements $\overline{R}$ with lower norms in the number field, 
as we show both  asymptotically~in~Section~\ref{sec:asymp}
and in practice~in~Section~\ref{sec:practical}. We give in Appendix~\ref{Appendix:Example} a concrete application of Algorithm~\ref{our_algorithm} on a finite field of extension $28$ and another application on a finite field of extension $50$.

\subsection{Dimension of the sublattice}
\label{subsec:setting_s}
As seen above, the dimension of the sublattice plays a key role 
in the norm of the output candidates in the number field~$\K$.
This dimension, which is $n-s$ is monitored by a parameter~$s$,
equal to the number of rows and columns we erase from the original
lattice~$\LL$. For this reason, $s$ is clearly an integer greater or
equal to~$0$. Besides, we cannot take $s$ larger than $d-2$.
Indeed, if we delete the last $s=d-1$ rows and columns from the matrix,
that would leave us, once the lattice is mapped to the finite field,
with a sub-vector-space of dimension $1$.
This would generate a trivial algorithm where we would get at the end
either a trivial element~$0$ in the finite field or
the element given by the last row of the matrix, multiplied
by a constant factor. The precise analysis that permits to balance
the risks and benefits of lowering the dimension of the lattice and
correctly tune $s$ is given in Section~\ref{sec:asymp}.
It leads to the following theorem that tells how to choose~$s$ before running the algorithm.

\begin{theorem}
\label{theo:setting_s}
Let $p$ be the characteristic of the finite field, $n$ its extension degree, $d$ the
largest divisor of $n$ and $\zeta \in [0,1]$ 
the parameter such that $\|f\|_{\infty} = p^{\zeta}$ where $f$ is the polynomial defining the number field.
Let $s_1 = n - \sqrt{  \frac{2 (n-d) n \log p }{n \log 2  + 2 \zeta \log p}} $.
The best asymptotic complexity is reached for Algorithm~\ref{our_algorithm}
with LLL when $s$ is defined as follows:
\begin{itemize}
\item If $s_1 <0$, then $s=0$.
\item If $0 \leq s_1 \leq d-2$ then $s = 
\lfloor s_1 \rfloor$ or $s = \lceil s_1 \rceil$.
\item If $s_1 > d-2$, then $s = d-2$.
\end{itemize}
\end{theorem}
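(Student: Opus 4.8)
The plan is to isolate the dependence of the splitting‑step complexity on the single parameter $s$, turn it into a one‑variable minimisation, solve that over the reals, and then handle integrality and the range $0\le s\le d-2$. Everything rests on the analysis carried out in Section~\ref{sec:asymp}: one iteration of the \textbf{Repeat} loop costs only a polynomial amount of work (it is dominated by a lattice reduction on a lattice of dimension $\le n$ with entries of polynomial size, cf.\ Theorem~\ref{complexity_of_BKZ} and its LLL analogue Theorem~\ref{LLL_bound}), so the complexity is driven by the inverse probability that $\N(\overline R)$ is $B$‑smooth. By the Canfield--Erd\H{o}s--Pomerance smoothness heuristic this probability is a decreasing function of $\log\N(\overline R)$, and — as recalled in the introduction — a smaller norm additionally permits a smaller $B$ and hence a shallower descent tree; so it suffices to pick $s$ so as to minimise $\log\N(\overline R)$.

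First I would express $\log\N(\overline R)$ as a function of $s$. The reduction is run on $\LL'$, of dimension $n-s$, so $\overline R=\sum_{k=0}^{n-s-1}r_k\alpha^k$ has $\deg\overline R\le n-s-1$, and the resultant bound for the norm gives, up to factors polynomial in $n$ (binomial coefficients, the $\mathrm{lc}(f)$ correction), hence negligible in the $L$‑notation,
\[
\log\N(\overline R)\ \le\ (n-s-1)\log\|f\|_\infty+n\log\|R\|_\infty+o(\cdot)\ =\ (n-s-1)\,\zeta\log p+n\log\|R\|_\infty+o(\cdot).
\]
Because $\LL$ is lower triangular with $n-d$ diagonal entries equal to $p$ and the remaining ones equal to $1$, and $\LL'$ is obtained from $\LL$ by deleting the last $s$ rows and columns, one has $\det(\LL')=p^{\,n-d}$. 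Feeding into the above the first LLL bound of Theorem~\ref{LLL_bound}, $\|R\|_2\le 2^{(n-s-1)/2}\lambda_1(\LL')$, together with Minkowski's estimate $\lambda_1(\LL')=O\big(\sqrt{n-s}\,\big)\det(\LL')^{1/(n-s)}$, yields
\[
\log\N(\overline R)\ \le\ g(s)+o(\cdot),\qquad g(s):=(n-s)\Big(\zeta\log p+\tfrac{n}{2}\log 2\Big)+\frac{n(n-d)\log p}{\,n-s\,},
\]
the harmless additive constants coming from the ``$-1$'' being absorbed into $o(\cdot)$.

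The minimisation is then routine. Writing $x=n-s>0$, $A=\zeta\log p+\tfrac{n}{2}\log 2>0$, $B=n(n-d)\log p>0$, the function $g=Ax+B/x$ is strictly convex with unique real minimum at $x^\star=\sqrt{B/A}$, that is
\[
x^\star=\sqrt{\frac{n(n-d)\log p}{\zeta\log p+\tfrac{n}{2}\log 2}}=\sqrt{\frac{2n(n-d)\log p}{n\log 2+2\zeta\log p}},\qquad\text{whence}\qquad s^\star=n-x^\star=s_1 .
\]
Since $s\mapsto n-s$ is affine and decreasing, $g$ is convex in $s$ on $(-\infty,n)$, so its minimum over the integer interval $[0,d-2]$ is attained: at $s=0$ when $s_1\le 0$ (there $g$ is nondecreasing on $[0,d-2]$); at $s=d-2$ when $s_1\ge d-2$ (there $g$ is nonincreasing); and otherwise at one of the two integers $\lfloor s_1\rfloor$, $\lceil s_1\rceil$ straddling $s_1$ — the integer minimiser of a convex function lies among the integers nearest its real minimiser — so taking $s$ equal to whichever of the two yields the smaller norm is optimal. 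This is exactly the trichotomy claimed. When moreover $x^\star=n-s_1\to\infty$, as happens in the ranges of $p$ of interest, moving $s$ by $O(1)$ perturbs $g$ only by $O\!\big(B/(x^\star)^3\big)=o\big(g(x^\star)\big)$, so both candidates realise the same leading constant in the resulting $L_{p^n}(1/3,\cdot)$ complexity, which is why the statement can leave the rounding direction unspecified.

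The calculus is easy; the care goes into justifying $\log\N(\overline R)\le g(s)+o(\cdot)$. One has to check that the combinatorial factors in the resultant bound, the leading‑coefficient correction of $\N$, and the $\sqrt{n-s}$ Minkowski factor are all sub‑exponential and hence invisible in the $L$‑notation, and — this is the genuinely heuristic ingredient, standard in NFS analyses — that $\N(\overline R)$ is indeed of the order of this upper bound and behaves, for the smoothness test, like a random integer of that size. Granting this, monotonicity of the complexity in $\log\N(\overline R)$ makes the minimisation above equivalent to minimising the complexity, and Section~\ref{sec:asymp} supplies the details; the main obstacle, such as it is, is precisely this bookkeeping rather than any hard estimate.
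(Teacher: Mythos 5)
Your proposal is correct and follows essentially the same route as the paper: you minimise the LLL norm bound $\N(\overline R)\lesssim 2^{\Theta(n(n-s))}\,p^{\,n(n-d)/(n-s)+\zeta(n-s-1)}$ as a one-variable function of $s$, obtain the real minimiser $s_1=n-\sqrt{2n(n-d)\log p/(n\log 2+2\zeta\log p)}$, and clamp/round to $[0,d-2]$; the paper does exactly this, only replacing your explicit $Ax+B/x$ convexity argument with a SageMath-computed variation table. Your added justifications (why minimising the norm minimises the complexity, why the rounding direction is asymptotically immaterial, why the integer minimiser of a convex function lies at the rounded real minimiser) are sound refinements rather than a different method, and your constant in the exponent of $2$ is the one consistent with the theorem's stated $s_1$.
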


\subsection{Variant for MTNFS and STNFS}
\label{subsec:MTNFSandSTNFS}

The algorithm described above works for NFS and TNFS for composite extension
degrees. It is a natural question to wonder whether it applies 
to TNFS when coupled with a multiple variant or a special variant. To avoid 
burdening this article with long details, we simply
give guidelines concerning our way to answer this question, without any long explanation on both MTNFS and STNFS.

Using a multiple variant~\cite{barbulescu:hal-00952610,EC:Pierrot15,C:KimBar16,PKC:KimJeo17}  does not affect our result here, and one can apply it almost directly, as there is no particular
way to deal with a multiple diagram during the initial splitting step. The number of potential number fields to lift in increases, but the idea remains the same:   lift your target element from the target
finite field to this number field with the lower norms.

When using a special variant~\cite{PAIRING:JouPie13,C:KimBar16,PKC:KimJeo17} for a sparse characteristic, the number field with the lower norms is the one given by the
polynomial with small coefficients but degree $\lambda n$, where $\lambda$
is a constant that depends on the target (pairing) finite field.
We need then to slightly modify the lattice in our algorithm to deal with this larger
degree. The following paragraph tackles this issue.

\paragraph{Construction of the lattice when  $\hbox{deg}(f)>n$.}
When the polynomial selection gives two polynomials $f_1$ and $f_2$
with different degrees and sizes for the coefficients, the general idea
for the individual logarithm step is to choose to lift the target in the number field
that naturally shows the smaller norms. The two polynomials are at least
of degree~$n$ since they share a common factor $\varphi$ of degree~$n$ defining
the target finite field, but one of it can have a strictly higher degree, for instance~$2n$, or even greater with the special variant.
Our algorithm applies to this more general context when the extension degree 
of the number field~$\K$ in which we lift the target elements is greater than the extension degree of the finite field. 
Indeed, let us assume that $\K =  \Q[x]/(f)$ and $\deg(f)=\tilde n \geq n$, 
then we look for $s$ over $[ 0, \tilde n-n+d-2]$ and we construct the following lattice of dimension $\tilde n \times \tilde n$ instead of the lattice $\LL$ in Algorithm~\ref{our_algorithm}:
\[\widetilde{\LL} = \begin{pmatrix}
       p & 0 & 0 & \dots & \dots & \dots & \dots & \dots &  &  & \\
          \vdots & \ddots \\
        0 &  \dots & p & 0 & \dots & \dots  &  &  & & \\
        \overline{e_{00}} & \overline{e_{01}} & \dots & 1 & 0 & \dots & \dots &  & & \\
        \overline{e_{10}} & \overline{e_{11}} & \dots & * & 1 & 0 & \dots & \dots & & \\
                          &                   &                   &        &   & \ddots &  & & & \\
        \overline{e_{d-1 \ 0}} & \overline{e_{d-1 \ 1}} &  & & \dots && 1& &  & \\  
        \varphi(x) & &  & & && &1 &  & \\
         & \ddots & & & & & & & &   \ddots & \\
         & & x^{\tilde n -n-1} \varphi(x) & & & && & & & 1 \\
	\end{pmatrix}.\]

\section{Asymptotic analysis with LLL as lattice reduction algorithm}
\label{sec:asymp}

In this section our aim is to determine the asymptotic optimal choice for~$s$: 
For a given lattice~$\LL$, we want to set~$s$ so that the algorithm outputs the
element with the smallest possible norm in the number field.
In other words we seek for the optimal sublattice. Note that in 
this section we assume that LLL is the lattice reduction algorithm that
is run. In particular in Paragraph~\ref{LLL_not_working},
we underline that neither our method nor Guillevic's one 
has any interest with LLL when the characteristic is in the lower part
of the medium characteristic area, namely when $p=L_Q(\alpha)$ 
with~$1/3 < \alpha < 1/2$.
In Paragraph~\ref{subsec:sublattice} we determine the
optimal choice for~$s$ and propose a criteria on the polynomial
selections that are concerned by our improvement.
Working with BKZ gives better
results as there is lighter restriction on~$\alpha$. For this reason we
only give the full asymptotic complexity for BKZ in~Section~\ref{sec:BKZ}, not LLL.

\subsection{Norms in the number field of the output of LLL}
\label{LLL_not_working}
Let $s \in [ 0, \ d-2 ]$ be an integer. 
We denote by $\overline{R_1}\in \K$ the candidate created thanks to the first
vector of the output of LLL in one loop of Guillevic's algorithm,
 and by $\overline{R_2}$ its counterpart in our algorithm.
 To study the quantities $\N \left(\overline{R_1}\right)$ and $\N\left(\overline{R_2}\right)$,
we start by recalling a useful bound on norms in a
number field.
For any $\overline{R} \in \K$:
\begin{equation}
\label{bound-in-number-field}
\N(\overline{R}) \leq  \left( \deg(\overline{R})+1 \right)^{\frac{\deg(f)}{2}} \left( \deg(f)+1 \right)^{\frac{\deg(\overline{R})}{2}} \|\overline{R}\|._{\infty}^{\deg(f)} \|f\|_{\infty}^{\deg(\overline{R})}
\end{equation}
We recall as well the following formula 
where as usual $Q = p^n$:
\begin{equation}
\label{formula_n}
n = \frac{1}{c} \left( \frac{\log Q}{\log \log Q}\right)^{1-\alpha}
\end{equation}

In the sequel, we assume that \emph{$\deg(f) = n$} and $\|f\|_{\infty} = p^{\zeta}$
for some $\zeta \in [0, 1]$, where $f$ is the polynomial defining the number field.
Note that $\zeta$ depends on the polynomial selection, 
and typical value are for instance $0$, $1/2$ or $1$.
Applying Theorem~\ref{LLL_bound} and 
Equation~\eqref{bound-in-number-field} while 
keeping in mind that $\deg(\overline{R_1}) = n-1$ and $\deg(\overline{R_2}) = n-s-1$, we 
deduce the following bounds on $\overline{R_1}$ and $\overline{R_2}$:
\[
\begin{array}{rcl}
 \N \left( \overline{R_1} \right) &\leq& 
 n^{\frac{n}{2}} (n+1)^{\frac{n-1}{2}} 2^{n \frac{n-1}{4}} p^{(1 + \zeta)n - d - \zeta }\\
\hbox{and} \quad  \N \left( \overline{R_2} \right) &\leq& (n-s)^{\frac{n}{2}} (n+1)^{\frac{n-s-1}{2}} 2^{n \frac{n-s-1}{4}} p^{n \frac{n-d}{n-s} + \zeta (n-s-1)}.
\end{array}
\]

Our aim is to minimize the second bound in the variable $s$.
We start by proving that the combinatorial factors in the bounds,
namely
$ n^{\frac{n}{2}} (n+1)^{\frac{n-1}{2}} $ and  $(n-s)^{\frac{n}{2}} (n+1)^{\frac{n-s-1}{2}}$
 are negligible with respect to the other factors, as soon as~$\alpha > 0$. 
Indeed, on the one hand thanks to Equality~\eqref{formula_n},
$ \log \left( n^n \right)$ is upper-bounded by
$(- \log c) / c \left(\log Q / \log  \log Q  \right)^{1 - \alpha} 
+ (1-\alpha)/c (\log Q)^{1 - \alpha} \left( \log\log Q  \right)^{\alpha} $, thus $n^n$ is in $L_Q(1-\alpha)$. On the other hand $p^{(1 + \zeta)n - d - \zeta }$ and $p^{n \frac{n-d}{n-s} + \zeta (n-s-1)}$ are lower bounded by $p^{\frac{n}{2} - 1} = L_Q(1)$.
Moreover, it is easy to see that the factor in $2^{n^2}$ that appears in both bounds is in $L_Q(2 (1-\alpha))$. 
It means that whenever $\alpha > 1/2$ this factor is negligible compared to the one in $L_Q(1)$, whenever $\alpha = 1/2$ it is in $L_Q(1)$, and whenever $\alpha < 1/2$ it dominates over $L_Q(1)$. We sum up this paragraph as follow.
Let $\overline{R}$ be the output of Algorithm~\ref{our_algorithm}: 
\begin{itemize}
\item if $\alpha > 1/2$, then $\N \left( \overline{R} \right) = O \left(p^{n \frac{n-d}{n-s} + \zeta (n-s-1)}  \right)$.
\item if $\alpha = 1/2$, then $\N \left( \overline{R} \right) = O \left(2^{n \frac{n-s-1}{4}} p^{n \frac{n-d}{n-s} + \zeta (n-s-1)}  \right)$.
\item if $\alpha < 1/2$, then the extension degree becomes too large and both bounds, Guillevic's ($s=0$) and ours become dominante with respect to $L_Q(1)$. Hence our method  -- including Guillevic's one -- is not better than a regular and simple lift of the target, without any lattice reduction. 
Indeed Inequality~\eqref{bound-in-number-field} directly states that 
a norm of any element coming up from the finite field is bounded by $L_Q(1)$. 
As far as we know,
this limitation of LLL was not made explicit in the literature.
\end{itemize}

The asymptotic complexity obtained with LLL is given in Theorem~\ref{complexity_friabilisation}.
The curious reader can find the whole complexity analysis for $s=0$ in~\cite{guillevic_descent}.

\subsection{Optimal sublattice dimension}
\label{subsec:sublattice}
We consider the bound on the norm of the output of Algorithm~\ref{our_algorithm}, namely $2^{n \frac{n-s-1}{4}} p^{n \frac{n-d}{n-s} + \zeta (n-s-1)}$ where we only neglect the combinatorial factors. We minimize this bound in $s$, thus proving Theorem~\ref{theo:setting_s} given in Section~\ref{sec:smoothing}

\begin{proof}
We introduce the following function in $s$: 
\[
h : s \mapsto 2^{n \frac{n-s-1}{4}} p^{n \frac{n-d}{(n-s)} + \zeta (n-s-1)}.
\]
We look for an integer $s_{opt} \in [0, \ d-2 ]$ such that $h(s_{opt}) = \min_{s \in [ 0, \ d-2 ] } \{ h(s) \}$. 
A computation done with SageMath 
gives the following table of variation:

\begin{center}
\begin{tikzpicture}[scale=0.5]
   \tkzTabInit{$s$ / 1 , $h$ / 1.5}{,$s_1$, $n$, $s_2$,}
   \tkzTabVar{+/, -/, R/, +/, -/}
\end{tikzpicture}
\end{center}

where $s_1 = n - \sqrt{  \frac{2 (n-d) n \log p}{n \log 2 + 2 \zeta \log p}  }$ and $s_2 > n$. Thus $h$ decreases then increases on $[0, n]$ and reaches its minimum in $s_1$.
\end{proof}

The above result explicits the optimal sublattice to construct when we use LLL as
a lattice reduction algorithm: 
\begin{itemize}
\item Either the optimal lattice is already the (full) one of dimension $n$ given in~\cite{guillevic_descent}.
\item Or the optimal lattice is given by a formula, stating how many vectors we should erase.
\item Or the optimal one is when we withdraw as many vectors as we can, which means
$d-2$.
\end{itemize}
With a given polynomial selection, and thus a fixed parameter~$\zeta$,
a natural question is whether we need to choose a sublattice or the full
lattice. To answer this question we give a simple condition on $\zeta$
that ensures that the optimal sublattice  is a strict sublattice. 
First we remark that in Theorem~\ref{theo:setting_s} if $s_1 \geq 1$ then $s_{opt} > 0$. 
So we study the condition $s_1 \geq 1$.
\[
\begin{array}{ccc}
s_1  \geq 1& \iff& n - \sqrt{  \frac{2 (n-d) n \log p }{n \log 2  + 2 \zeta \log p }  } \geq 1
\\
&\iff &(n-1)^2 \geq \frac{2 n (n-d)}{n \cdot \frac{ \log 2}{\log p } + 2\zeta} 
\\
&\iff& n \cdot \frac{ \log 2}{\log p } + 2 \cdot \zeta \geq \frac{2 n(n-d)}{(n-1)^2}
\\
&\iff& \zeta \geq \frac{n(n-d)}{(n-1)^2} - \frac{n \log 2 }{2\log p }.
\end{array}
\]
Thus for polynomial selection methods that outputs such a $\zeta$, our algorithm with LLL offers lower norms than~\cite{guillevic_descent} with LLL. For instance if we deal with even extensions, then $d=n/2$ and our algorithm is asymptotically better whenever
$\zeta \geq \frac{1}{2} \left(\frac{n}{n -1}\right)^2 - \frac{n \log 2}{2\log p}.$ It is
sufficient to have:
\begin{equation}
\label{eq:zeta}
\zeta \geq \frac{1}{2} \left(\frac{1}{1 -1/n}\right)^2.
\end{equation}

\begin{example}
JLSV$_1$ polynomial selection presented in~\cite{C:JLSV06} is a theoretical 
corner case for our method:
it outputs two polynomials $f_1$ and~$f_2$
with both degree~$n$ and coefficients such that  $\|f\|_{\infty} = \sqrt{p}$, 
namely $\zeta = 1/2$, which is the limit obtained in \eqref{eq:zeta}
when $n$ tends to infinity. Note that JLSV$_1$ is useful in the TNFS setting both 
in theory and in practice. The question whether in practice our method lowers the norms
for this polynomial selection
for current relevant sizes of finite fields is the topic of Section~\ref{sec:practical}.
\end{example}

\section{Asymptotic analysis with BKZ as lattice reduction algorithm}
\label{sec:BKZ}

This section
details the asymptotic analysis of our
algorithm when $s=0$ 
and when we use BKZ instead of LLL.
Indeed, recall that with LLL, this algorithm is asymptotically meaningful in finite fields where $\alpha \geq 1/2$. The idea is to overcome
this difficulty, that comes from 
$2^{n \frac{n-1}{4}} = L_Q \left( 2(1-\alpha) \right)$ in the bound of the norms,
by looking at an algorithm providing another term for this bound.
We show in this section that BKZ permits to extend the range of
application of the algorithm. Besides
it leads to a better asymptotic complexity for the initial splitting step.

\subsection{Fine tuning the parameter $\beta$ in BKZ when $s=0$}
\label{choice-of-beta}
Let $\beta$ be an integer in $ [ 2, n ]$ that denotes the block size in BKZ, $s=0$ and write again $\deg(f) = n$, and $\|f\|_{\infty} = p^{\zeta}$ where $f$ is the polynomial defining the number field. Let $\bar R$ be the element in the number field
constructed thanks to the coefficients of the first vector of the basis output 
by BKZ in Algorithm~\ref{our_algorithm}. Thanks to Theorem~\ref{BKZ-bound} 
and to the usual bound of a norm in a number field given by the resultant:
\begin{equation}
\label{norm_with_BKZ}\mathcal{N} 
\left(\bar R\right) \leq 2^n \beta^{n \left( \frac{n-1}{2(\beta-1)} + \frac{3}{2} \right)}  p^{n-d + \zeta (n-1)}.
\end{equation}
The combinatorial factors are negligible in the considered characteristic range.
We choose the largest $\beta$ under the constraint that $\beta$-BKZ 
stays asymptotically negligible compared to $L_Q(1/3)$.
Indeed, such a $\beta$ would neither increase the complexity of the initial
splitting step -- that is in $L_Q(1/3)$
nor the individual logarithm phase. 

From Theorem~\ref{complexity_of_BKZ} we look for the largest $\beta$ such that 
$ \hbox{Poly} \left(n, \hbox{size}(\LL)\right) 2^{O(\beta)}$ is negligible with respect to $L_Q(1/3)$, where $\hbox{size}(\LL)$ denotes the sum of logarithms of absolute values of the coefficients of our input matrix $\LL$.
On the one hand $\LL$ has coefficients all bounded by $p$, thus $\hbox{size}(\LL) \leq n^2 \log p$ and $ n^2 \log p = O(\log Q)$ from which we deduce that $\hbox{Poly} \left(n, \hbox{size}(\LL)\right)$ is negligible with respect to $L_Q(1/3)$.
On the other hand writing $\beta = n^x$ and using Equality~\eqref{formula_n} 
we get $\log (2^\beta) = \log 2/c^x (\log Q/\log\log Q)^{x(1-\alpha)}$. 
We deduce that $2^\beta$ is negligible compared to~$L_Q(x(1-\alpha))$, and likewise $2^{O(\beta)}$ is negligible compared to~$L_Q\left(x(1-\alpha)\right)$. 
We set $x$ the largest possible number such that $x(1-\alpha)\leq 1/3$ keeping in mind that $x$ must be smaller than $1$ since $\beta = n^x$ must be smaller than n. This gives the following choice:
$$
x = \left|
\begin{array}{ccc}
1 &\text{if}& \alpha \geq 2/3\\
\frac{1}{3 (1-\alpha)} &\text{if}& \alpha < 2/3
\end{array}
\right.
$$

\paragraph{Summary for the choice of the parameter~$\beta$:}
\begin{itemize}
\item When $\alpha > 2/3$, we are dealing with finite fields with large 
characteristics relatively to the size of $n$, so the extension degree, which is small,
gives a lattice~$\LL$ of small enough dimension so that we
can directly run an enumeration algorithm on it to find the shortest vector.
Indeed, setting $\beta = n$ in BKZ means calling an oracle to solve 
 SVP on the whole lattice, which are in practice enumeration 
algorithms such as
 Kannan-Fincke-Pohst algorithms~\cite{Fincke1985ImprovedMF,kannan87} or 
more recent
techniques as developed in~\cite{SODA:MicWal15}.
The complexity of 
\cite{SODA:MicWal15} is $2^{O\left(n \log(n)\right)} = L_Q(1-\alpha)$. 
This complexity is negligible with 
respect to the complexity of the individual logarithm step which is in $L_Q(1/3)$ 
as we see in the sequel.
\item When $\alpha = 2/3$, setting $\beta=n$ in BKZ is the good option too. However~\cite{SODA:MicWal15} becomes non negligible 
but~\cite{STOC:MicVou10}\footnote{The counter part of this enumeration algorithm is its exponential space complexity.}  that has a time complexity in $O\left(2^{2n}\right)$ stays negligible with respect to $L_Q(1/3)$.
\item When $\alpha < 2/3$, the extension degree and thus the dimension of the lattice becomes larger, and looking at blocks in BKZ becomes mandatory.
We propose to set $\beta = n^{(3(1 - \alpha))^{-1}}$ (which is strictly lower than $n$). The complexity of $\beta$-BKZ remains negligible compared to $L_Q(1/3)$.
\end{itemize}

\subsection{Norms in the number field of the output of BKZ}
\label{BKZ_norm_output}
Now that $\beta$ is set, we evaluate the norm of the element~$\bar R$ in the number field 
that corresponds to the first
vector of the matrix output by BKZ.

We start with large characteristic finite fields. When $\alpha \geq 2/3$ 
the idea is to apply an enumeration algorithm outputting elements of 
norms $ n^{n/2} p^{n -d + \zeta(n-1)} = L_Q\left(1, 1 + \zeta - d/n\right)$. 
Indeed, Minkowski's theorem brings $ ||R||_{\infty} \leq n^{1/2} p^{(n-d)/n}$
where $R$ is 
 the shortest vector.

Let us focus at Equation~\eqref{norm_with_BKZ} that gives a bound on the norms in the medium characteristic case. When $\alpha < 2/3$,
we set $\beta = n^x$ with $x = \frac{1}{3 (1-\alpha)}$. 
First, as above $p^{n-d + \zeta (n-1)}~=~L_Q\left(1, 1 + \zeta - d/n\right)$ and
 $2^n \leq L_Q(1-\alpha)$ is negligible compared to $L_Q(1)$ whenever $\alpha>0$. Second let us have a look at~$D = \beta^{n \left(\frac{n-1}{2(\beta-1)} + \frac{3}{2}\right) }$, and study its size.
$D = n^{ \frac{xn}{2} \left(\frac{n-1}{n^x - 1} + 3\right) }$,
by the mean value theorem applied to the 
function $f:y \mapsto y^x$, where $x<1$ on the interval $[1, n]$ we have: 
$ (n-1)\cdot(n^x - 1)^{-1}\leq (n^{1-x})/x$, 
which yields $D\leq n^{ \frac{n^{2-x}}{2} + \frac{3nx}{2}}.$
We evaluate this last quantity in two steps:

\begin{itemize}
\item $ n^{\frac{n^{2-x}}{2}} 
= L_Q \left( (2-x)\cdot (1-\alpha), (1-\alpha)\cdot (2 c^{2-x})^{-1} \right)$ where

 $(2-x)\cdot (1-\alpha) 
 = 2(1-\alpha) - 1/3$. 
As $2(1-\alpha) - 1/3\leq 1$ 

$\iff \alpha \geq 1/3$, we identify three cases:
	\begin{itemize}
	\item If $\alpha >1/3$, then $n^{\frac{n^{2-x}}{2}}$ is negligible compared to $L_Q(1)$.
	\item If $\alpha =1/3$, then $n^{\frac{n^{2-x}}{2}} = L_Q \left(1, (1-\alpha) \cdot (2 c^{2-x})^{-1}\right) $.
	\item If $\alpha < 1/3$, then the bound is no longer asymptotically significant because a simple lift in the number field of any element of the finite field has norm of size at most $L_Q(1)$.
	\end{itemize}
	
\item $n^{3nx/2}$ is negligible compared to the first factor $n^{\frac{n^{2-x}}{2}}$.
\end{itemize}
Let us compare LLL and BKZ and summarize our result up to now.

\paragraph{Six areas for the characteristics.} Here are the different areas
and the summary of the behavior of LLL and BKZ on the norms of the output
elements, depending on the size
of the characteristic, from the smallest ones, to the largest ones. 
\begin{itemize}
\item If $\alpha <1/3$ then neither LLL nor BKZ gives lower
norms than an easy lift from the finite field to the number field. 
\item If $\alpha = 1/3$, then LLL is not relevant but BKZ outputs
elements in~$\K$ with a norm bounded by $L_Q (1,1 + \zeta - d/n + (1-\alpha) \cdot (2 c^{2-x})^{-1})$.
\item If $ 1/3 < \alpha < 1/2$, then LLL is not relevant but BKZ
 provides a bound which is $L_Q (1,1 + \zeta - d/n)$.
 \item If $\alpha = 1/2$, the bound for the norm of the number field element element given
 by LLL is
 $L_Q \left(1, 1 + \zeta - d/n +  c^{-2} \log 2 \right)$ 
 while with BKZ we can get a lower bound $L_Q \left(1,1 + \zeta - d/n \right)$.
\item If $1/2 < \alpha < 1$, then the two bounds given by LLL and BKZ are equivalent and are in $L_Q(1,1 + \zeta - d/n)$.

\item If $2/3 \leq \alpha < 1$, an enumeration algorithm can replace LLL or BKZ and outputs norms in $L_Q(1,1 + \zeta - d/n)$ as well.
\end{itemize} 

\subsection{New asymptotic complexity for the individual logarithm phase}
\label{subsec:best_comp}
Since the initial splitting step dominates in terms of complexity the descent phase,
 the asymptotic complexity of the individual logarithm step is  the complexity of the 
step we are studying. As seen in~Paragraph~\ref{BKZ_norm_output},
some characteristic ranges and polynomial selections permit
to lower the norms, and thus to lower the individual
logarithm phase complexity.
\\
Recall that our choice of $\beta$ ensures that $\beta$-BKZ is of negligible complexity
compared to the complexity of the ECM smoothness test done to see if 
the norm is $B$-smooth or not, in each loop. To conclude on the total 
asymptotic complexity of this step, we must estimate the number of loops 
required to find a $B$-smooth element. To do so we recall two useful theorems 
concerning the probability of smoothness and the running time to find a smooth element.

\begin{theorem}[Canfield, Erdos, Pomerance]\cite{canfield1983problem}
\label{canfield}
\\
Let $(\alpha_1, \alpha_2, \ c_1, \ c_2) \in [0, \ 1]^2 \times [0, \ +\infty[^2$ 
such that $\alpha_1 > \alpha_2$ or ($\alpha_1 = \alpha_2$ and $c_1>c_2$). Denote by $\mathbf{P}$ the probability that a natural \textbf{random} number smaller than $A = L_Q(\alpha_1, \ c_1)$ to be $B = L_Q(\alpha_2, c_2)$-smooth. Then: 
$$
\mathbf{P}^{-1} = L_Q\left(\alpha_1 - \alpha_2, \ \left(\alpha_1 - \alpha_2\right) \frac{c_1}{c_2}\right).
$$
\end{theorem}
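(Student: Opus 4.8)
The plan is to reduce the statement to the classical asymptotic for the count of smooth integers and then to carry out the bookkeeping in $L$-notation. Write $A = L_Q(\alpha_1,c_1)$, $B = L_Q(\alpha_2,c_2)$ and set $u = \log A / \log B$, so that $\mathbf{P} = \Psi(A,B)/A$, where $\Psi(x,y)$ denotes the number of $y$-smooth integers up to $x$. First I would record the value of $u$: using $\log A = (c_1+o(1))(\log Q)^{\alpha_1}(\log\log Q)^{1-\alpha_1}$ and the analogous expression for $\log B$, one gets
\[
u = \left(\frac{c_1}{c_2}+o(1)\right)\left(\frac{\log Q}{\log\log Q}\right)^{\alpha_1-\alpha_2},
\]
so the hypothesis $\alpha_1>\alpha_2$ is precisely what makes $u\to\infty$. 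A quick comparison of $\log u \sim (\alpha_1-\alpha_2)\log\log Q$ with $\log B$ then shows $u = B^{o(1)}$, so we stay well inside the range of validity of the smooth-number estimate. (If instead $\alpha_1=\alpha_2$ and $c_1>c_2$, then $u$ stays bounded, $\mathbf{P}=\Theta(1)$, and the claimed $L_Q(0,\cdot)$ is merely a $(\log Q)^{o(1)}$ statement; so one may assume $\alpha_1>\alpha_2$.)

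Next I would invoke the theorem of Canfield, Erd\H{o}s and Pomerance \cite{canfield1983problem} (in the uniform form later sharpened by de Bruijn and Hildebrand): in this range $\Psi(x,y) = x\,\rho(u)\,(1+o(1))$, where $\rho$ is the Dickman function, together with the elementary asymptotic $\rho(u) = \exp\!\bigl(-u\log u\,(1+o(1))\bigr)$ as $u\to\infty$. Combining these gives $\mathbf{P} = \exp\!\bigl(-u\log u\,(1+o(1))\bigr)$, hence $\mathbf{P}^{-1} = \exp\!\bigl(u\log u\,(1+o(1))\bigr)$.

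It then remains to evaluate $u\log u$. Since $\log u = (\alpha_1-\alpha_2)\log\log Q + O(\log\log\log Q) = (\alpha_1-\alpha_2+o(1))\log\log Q$, multiplying by the expression for $u$ above yields
\[
u\log u = \bigl((\alpha_1-\alpha_2)\tfrac{c_1}{c_2}+o(1)\bigr)(\log Q)^{\alpha_1-\alpha_2}(\log\log Q)^{1-(\alpha_1-\alpha_2)},
\]
which is exactly the exponent defining $L_Q\!\bigl(\alpha_1-\alpha_2,\ (\alpha_1-\alpha_2)\frac{c_1}{c_2}\bigr)$, giving the claimed formula.

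The only genuinely hard ingredient is the uniform estimate $\Psi(x,y)=x\,\rho(u)\,(1+o(1))$ itself, which is the content of \cite{canfield1983problem}; everything after that is elementary. The one point deserving a little care is that the various $(1+o(1))$ and $o(1)$ factors all sit inside exponents and must be shown to collapse into a single $o(1)$ — this is straightforward here because $u\to\infty$ like a fixed positive power of $\log Q/\log\log Q$ while all error terms are sub-polynomial in that quantity, so their contribution to the exponent $u\log u$ is itself $o\!\bigl((\log Q)^{\alpha_1-\alpha_2}(\log\log Q)^{1-(\alpha_1-\alpha_2)}\bigr)$.
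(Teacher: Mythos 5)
The paper does not prove this statement at all: it is quoted as an external result, with \cite{canfield1983problem} as the reference, so there is no internal proof to compare against. Your derivation is the standard one and is correct: reducing $\mathbf{P}$ to $\Psi(A,B)/A$, invoking the Canfield--Erd\H{o}s--Pomerance estimate $\Psi(x,x^{1/u})=x\,u^{-u(1+o(1))}$ (equivalently $x\rho(u)(1+o(1))$ with $\rho(u)=u^{-u(1+o(1))}$), and then checking that $u=(c_1/c_2+o(1))(\log Q/\log\log Q)^{\alpha_1-\alpha_2}$ and $\log u=(\alpha_1-\alpha_2+o(1))\log\log Q$ combine to give exactly the exponent of $L_Q\bigl(\alpha_1-\alpha_2,(\alpha_1-\alpha_2)\tfrac{c_1}{c_2}\bigr)$; your handling of the $o(1)$'s in the exponent and of the degenerate case $\alpha_1=\alpha_2$ is also right. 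The only caveat worth recording is that when $\alpha_2=0$ the bound $B$ is only polylogarithmic in $A$, so $u$ sits at the edge of the uniformity range $u\le(1-\varepsilon)\log A/\log\log A$ of \cite{canfield1983problem} (and outside Hildebrand's range for $\rho$), and the statement there requires $c_2>\alpha_1$ or an appeal to sharper results; this does not affect the paper, which only uses the theorem with $\alpha_2=2/3$.
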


Let the smoothness bound be written as $B = L_Q(\alpha_B, \ c_B)$.
Theorem~\ref{complexity_friabilisation} that mostly
comes from~\cite{guillevic_descent} states the best choice on $B$.
To find this $B$ the key idea is to balance two different effects when $B$ increases.
 On the one hand,
the probability of an element $\bar R$ to be $B$-smooth increases.
On the other hand, the $B$-smoothness test by ECM becomes more costly. 

\begin{theorem}
\label{complexity_friabilisation}
Let $\bar R$ be an element of the number field~$\K$ constructed thanks to the output of LLL or $\beta$-BKZ on the lattice~$\LL$ with
dimension~$n$. 
Let $e > 0$ such that $\N(\bar R) <  L_Q(1, e)$. Then under the assumption that $\N(\bar R)$ is uniformly distributed over $[1, Q^e]$, the minimal time for the corresponding algorithm to find a $B$-smooth element is
$$
L_Q \left(\frac{1}{3}, (3e)^\frac{1}{3}\right)
$$
reached with $\alpha_B = 2/3$ and $c_B = \left(\frac{e^2}{3}\right)^\frac{1}{3}$.
\end{theorem}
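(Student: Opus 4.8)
The plan is to balance the two competing costs inside the \textbf{Repeat} loop of Algorithm~\ref{our_algorithm}: the expected number of iterations needed before the norm $\N(\bar R)$ comes out $B$-smooth, and the cost of a single $B$-smoothness test. Write the smoothness bound as $B = L_Q(\alpha_B, c_B)$ with $\alpha_B \in (0,1)$ and $c_B > 0$ left free for now, and optimize at the end.

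First I would estimate the number of loops. Under the stated heuristic that $\N(\bar R)$ is uniformly distributed over $[1, Q^e] = [1, L_Q(1,e)]$, the probability that it is $B$-smooth is exactly the quantity $\mathbf{P}$ of Theorem~\ref{canfield} applied with $(\alpha_1,c_1) = (1,e)$ and $(\alpha_2,c_2) = (\alpha_B, c_B)$, so the expected number of iterations is
\[
\mathbf{P}^{-1} = L_Q\!\left(1 - \alpha_B,\ (1-\alpha_B)\,\tfrac{e}{c_B}\right).
\]
Next, the cost of one iteration is dominated by the $B$-smoothness test: the lattice reduction is polynomial for LLL and was arranged to be negligible for $\beta$-BKZ through the choice of $\beta$ (using Theorem~\ref{complexity_of_BKZ}), and the resultant computation giving $\N(\bar R)$ is polynomial as well. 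Detecting all prime factors of $\N(\bar R)$ up to $B$ by ECM costs $L_B(1/2,\sqrt{2})$ up to polynomial factors; rewriting $\log B$ and $\log\log B$ in terms of $\log Q$ turns this into a test cost of
\[
L_Q\!\left(\tfrac{\alpha_B}{2},\ \sqrt{2\,\alpha_B\, c_B}\right).
\]

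The total running time is the product of these two expressions. I would then argue that the optimal exponent is forced to be $\alpha_B = 2/3$: if $\alpha_B > 2/3$ the per-test factor already has first parameter $\alpha_B/2 > 1/3$ and exceeds every $L_Q(1/3)$, whereas if $\alpha_B < 2/3$ the number-of-loops factor has first parameter $1-\alpha_B > 1/3$ and does the same. Setting $\alpha_B = 2/3$ makes both factors of the form $L_Q(1/3,\cdot)$, and multiplying them gives a total running time
\[
L_Q\!\left(\tfrac13,\ \tfrac{e}{3 c_B} + 2\sqrt{\tfrac{c_B}{3}}\right).
\]
The final step is the one-variable minimization of $\varphi(c_B) = \frac{e}{3 c_B} + 2\sqrt{c_B/3}$ over $c_B > 0$: differentiating, the stationary point satisfies $c_B^{3/2} = e/\sqrt{3}$, i.e.\ $c_B = (e^2/3)^{1/3}$, and substituting back collapses the two terms to $3 e^{1/3}/3^{2/3} = (3e)^{1/3}$, which is the claimed minimal value. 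The one genuinely delicate point is the asymptotic bookkeeping when converting the ECM cost from an $L_B$-expression to an $L_Q$-expression — one must check that $\log\log B \sim \alpha_B \log\log Q$ so that the $(\log\log Q)$-power in $\sqrt{\log B \log\log B}$ lands exactly on $1 - \alpha_B/2$ and the constant on $\sqrt{2\alpha_B c_B}$; everything else is elementary calculus together with Theorem~\ref{canfield}.
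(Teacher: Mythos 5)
Your proof is correct and follows essentially the same route as the paper's: write the total cost as $\mathbf{P}^{-1}$ times the ECM test cost, invoke Canfield--Erd\H{o}s--Pomerance, force $\alpha_B = 2/3$ so both factors sit at $L_Q(1/3)$, and minimize $\frac{e}{3c_B} + \sqrt{4c_B/3}$ to get $c_B = (e^2/3)^{1/3}$ and the value $(3e)^{1/3}$. Your extra remark about converting the ECM cost from an $L_B$- to an $L_Q$-expression is a useful sanity check but does not change the argument.
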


\begin{proof}
The cost of LLL or $\beta$-BKZ being negligible compared to the cost of the smoothness
 test done by ECM, the cost of the algorithm to find a $B$-smooth element is equal 
 to $\mathbf{P}^{-1} \times C$, where $\mathbf{P}$ is the probability of $\bar R$ 
 being $B$-smooth and $C$ is the cost of ECM. The cost of the 
 smoothness test done by ECM is $L_Q(\alpha_B/2, (2c_B \alpha_B)^{1/2})$.
So according to Theorem~\ref{canfield} the cost of our algorithm when $s=0$ (which
is exactly the method proposed in~\cite{guillevic_descent}) is:
$$
L_Q\left(\frac{\alpha_B}{2}, \ (2c_B \alpha_B)^\frac{1}{2}\right)\cdot  L_Q\left(1 - \alpha_B, \ \left(1 - \alpha_B\right) \frac{e}{c_B}\right).
$$
We want to minimize the above quantity. Let's start by minimizing the 
parameter $\max\left(\alpha_B/2, \ 1-\alpha_B\right)$ under the condition 
that this maximum must be lower than~$1/3$. 
Since the condition 
$\left \{ 
\begin{array}{ccc}
\frac{\alpha_B}{2} & \leq & 1/3\\
1-\alpha_B & \leq & 1/3
\end{array}
\right.$
is equivalent to $\alpha_B = 2/3$. 
We conclude that the optimal choice is $\alpha_B = 2/3$. This
is the first value we are looking for. Then, 
the cost of the algorithm becomes $L_Q\left(1/3, (4c_B/3)^{1/2} + e/(3 c_B) \right)$ which is minimal for $c_B = (e^2/3)^{1/3}$. This gives the announced cost. 
\end{proof}

\begin{corollary}[New asymptotic complexities for the individual logarithm step in composite extension degree]
\label{lower_complexity}
Let $p$ be the characteristic of a target finite field, $n$ its composite extension degree, 
$d$ the largest proper divisor of $n$, $f$ 
the polynomial defining the number field for the lift, and $\zeta$ such that~$
\| f \|_{\infty} = p^{\zeta}$. We consider our algorithm (algorithm~\ref{our_algorithm}) where $s$ is set to zero, meaning that no rows or columns are removed from the matrix.
Then the minimal complexity to find a $B$-smooth element in the number
field is:
\[
L_Q\left(\frac{1}{3}, (3e)^\frac{1}{3}\right)
\]
reached with $B = L_Q \left( \frac{2}{3}, \left(\frac{e^2}{3}\right)^\frac{1}{3}\right)$ where 
\begin{itemize}
\item $e = 1 + \zeta - \frac{d}{n} + (3c^{3/2})^{-1}$ if $\alpha = \frac{1}{3}$. This complexity is reached with BKZ only.
\item $e = 1 + \zeta - \frac{d}{n}$ if $ \frac{1}{3} < \alpha < \frac{1}{2}$. This complexity is reached with BKZ only.
\item $e = 1+\zeta - \frac{d}{n}$ if $\alpha = \frac{1}{2}$, reached with BKZ. For the sake of comparison, an algorithm with LLL as in ~\cite{guillevic_descent}
gives $e = 1+\zeta - \frac{d}{n} + \frac{\log(2)}{c^2}$.
\item $e = 1+\zeta - \frac{d}{n}$ if $\frac{1}{2}<\alpha<\frac{2}{3}$. This is reached either
with BKZ or with LLL.
\item $e = 1+\zeta - \frac{d}{n}$ if $\frac{2}{3}\leq\alpha<1$. This is reached
with enumeration, BKZ, or LLL.
\end{itemize}
\end{corollary}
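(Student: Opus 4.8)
The plan is to read off, for each of the characteristic ranges listed in the statement, the correct value of the exponent $e$ with $\N(\bar R) < L_Q(1,e)$ from the norm analysis of Paragraph~\ref{BKZ_norm_output}, and then to invoke Theorem~\ref{complexity_friabilisation} essentially verbatim. Recall that Theorem~\ref{complexity_friabilisation} asserts that, once $\N(\bar R) < L_Q(1,e)$, the optimal choice $\alpha_B = 2/3$, $c_B = (e^2/3)^{1/3}$ yields a total cost $L_Q(1/3,(3e)^{1/3})$ for the splitting step; and by the remark preceding it, the splitting step dominates the descent, so this is also the cost of the whole individual logarithm phase. Thus the entire content of the corollary reduces to (i) checking that the lattice reduction is of negligible cost so that Theorem~\ref{complexity_friabilisation} applies, and (ii) pinning down $e$ in each range.

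For (i), I would recall the choice of $\beta$ fixed in Paragraph~\ref{choice-of-beta}: $\beta = n$ (a full SVP oracle, realized by an enumeration algorithm) when $\alpha \ge 2/3$, and $\beta = n^{1/(3(1-\alpha))}$ when $\alpha < 2/3$. With this choice, Theorem~\ref{complexity_of_BKZ} together with $\hbox{size}(\LL) = O(\log Q)$ (all entries of $\LL$ are bounded by $p$ and $n^2\log p = O(\log Q)$) gives a reduction cost $\hbox{Poly}(n,\hbox{size}(\LL))\,2^{O(\beta)} = L_Q(x(1-\alpha))$ with $x(1-\alpha)\le 1/3$, hence negligible against the $L_Q(1/3)$ cost of the ECM phase; for $\alpha\ge 2/3$ the enumeration cost is $L_Q(1-\alpha)$ using \cite{SODA:MicWal15}, or $O(2^{2n})$ using \cite{STOC:MicVou10} at the boundary $\alpha = 2/3$, again negligible. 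This legitimates Theorem~\ref{complexity_friabilisation}, whose proof used precisely that the reduction cost is dominated by the ECM cost.

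For (ii), I would use Equation~\eqref{norm_with_BKZ}: up to negligible combinatorial factors, $\N(\bar R) \le 2^n\,\beta^{n(\frac{n-1}{2(\beta-1)}+\frac32)}\,p^{n-d+\zeta(n-1)}$. The factor $p^{n-d+\zeta(n-1)}$ is $L_Q(1,1+\zeta-d/n)$ by Equation~\eqref{formula_n}, and $2^n = L_Q(1-\alpha)$ is negligible against $L_Q(1)$ for $\alpha>0$. The only term that can matter is $D = \beta^{n(\frac{n-1}{2(\beta-1)}+\frac32)}$, which by the mean value theorem argument of Paragraph~\ref{BKZ_norm_output} satisfies $D \le n^{n^{2-x}/2 + 3nx/2}$, with leading piece $n^{n^{2-x}/2} = L_Q((2-x)(1-\alpha),(1-\alpha)/(2c^{2-x}))$ and $(2-x)(1-\alpha) = 2(1-\alpha)-1/3$. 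Hence $D$ is exactly $L_Q(1)$ when $\alpha = 1/3$, contributing (after substituting $x = 1/(3(1-\alpha)) = 1/2$) the term $(1-\alpha)/(2c^{2-x}) = (3c^{3/2})^{-1}$ to $e$; it is strictly negligible when $1/3<\alpha<2/3$; and it is too large when $\alpha<1/3$, which is the regime excluded from the statement. For $\alpha\ge 2/3$ I would instead use Minkowski's theorem, $\|R\|_\infty \le n^{1/2}p^{(n-d)/n}$, giving $\N(\bar R) \le n^{n/2}p^{n-d+\zeta(n-1)} = L_Q(1,1+\zeta-d/n)$, with the same bound holding for LLL or BKZ there since their extra factor is $L_Q(2(1-\alpha))$ with $2(1-\alpha)\le 2/3<1$. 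Finally for the $\alpha = 1/2$ line I would contrast with LLL: by Theorem~\ref{LLL_bound} its output carries the factor $2^{n(n-1)/4} = L_Q(2(1-\alpha))$, equal at $\alpha = 1/2$ to $L_Q(1,\log(2)/c^2)$, which is why LLL only reaches $e = 1+\zeta-d/n+\log(2)/c^2$, whereas BKZ removes this factor. Substituting each value of $e$ into Theorem~\ref{complexity_friabilisation} yields the announced complexities and $B = L_Q(2/3,(e^2/3)^{1/3})$. I expect the only delicate point to be the bookkeeping at the two boundaries $\alpha = 1/3$ and $\alpha = 2/3$, where one must track carefully which auxiliary factor is exactly $L_Q(1)$ versus merely contributing $o(1)$ to an exponent, and pick the right enumeration algorithm to keep the reduction cost negligible.
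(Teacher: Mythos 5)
Your proposal is correct and follows essentially the same route as the paper: the corollary is obtained exactly by combining the norm bounds of Paragraph~\ref{BKZ_norm_output} (the ``six areas'' summary, including the $(3c^{3/2})^{-1}$ term at $\alpha=1/3$ and the Minkowski bound for $\alpha\geq 2/3$) with Theorem~\ref{complexity_friabilisation}, whose applicability rests on the negligible cost of $\beta$-BKZ or enumeration established in Paragraph~\ref{choice-of-beta}. Your bookkeeping of $e$ in each range, including the LLL comparison at $\alpha=1/2$, matches the paper's.
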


Figure~\ref{fig:sublattices} in the introduction represents 
the complexities given by Corollary~\ref{lower_complexity} when $n$ is even and $\zeta$ is set to $1/2$.

\paragraph{Comparison with previous algorithms.}
While both Algorithm~\cite{guillevic_descent} and Algorithm~\ref{our_algorithm} 
using BKZ have the same 
asymptotic complexity when $1/2 < \alpha \leq 1$,
using BKZ allows to get a lower complexity when $\alpha = 1/2$.
Moreover, \cite{guillevic_descent} does not apply when $1/3 \leq \alpha < 1/2$, and 
to our knowledge, the only previous smoothing algorithm that works in this area 
is the Waterloo~\footnote{
Waterloo algorithm is designed for smoothing in small characteristic finite fields 
but is usable in this area too.} algorithm~\cite{C:BlaMulVan84}.
The asymptotic complexity of
this method when $1/3 \leq \alpha < 1/2$ is in $L_Q \left(1/3, \left(3(2+\zeta)\right)^{1/3}\right)$.
Hence our 
Algorithm \ref{our_algorithm} with BKZ is faster, as it has an asymptotic complexity in $L_Q \left(1/3, \left(3(1 +\zeta - d/n)\right)^{1/3}\right)$ in the same area.
Nevertheless, the Waterloo technique applies on any extension degree 
whereas Algorithm~\ref{our_algorithm} applies only on composite extension degrees.

\begin{remark}
When we get a new smaller $e$ value in the above corollary, we have a double gain. Indeed, it provides us with both a smaller complexity for
the initial splitting step -- we get a smooth element faster --
and a smaller smoothness bound -- the obtained element is more smooth, thus better for the descent step.
\end{remark}

\begin{example} Let us target a finite field with even extension degree~$n$
and characteristic~$p$.
Construct the number fields and the target finite field thanks to a 
polynomial selection
 that guaranties $\zeta = 0$ and $\deg(f)=n$. The Conjugation method is a good 
 example of such a selection. Theses parameters lead to $e = 1/2$
 because $d/n=2$.
 Then the complexity of the initial splitting step brought 
 by our algorithm using BKZ is:
\[L_Q\left( \frac{1}{3}, \left(\frac{3}{2}\right)^{\frac{1}{3}}\right),\]
where $(3/2)^{1/3} \approx 1.14$.
This value for the complexity is reached for any $p > L_Q(1/3)$. For the
sake of comparison, we recall that the complexity brought by the Waterloo algorithm in medium characteristic 
finite fields  is  $L_Q(1/3, 1.82)$.
\end{example}

\subsection{Combining the sublattice method with BKZ or enumeration}
\label{subsec:BKZ_method_s}

In this section we present  a mix of the two previous methods: 
we study the behavior 
  of BKZ or an enumeration algorithm on a sublattice~$\LL'$, namely 
  we set $s>0$.
We look at Algorithm~$\ref{our_algorithm}$ where the reduction algorithm is $\beta$-BKZ, with $\beta = (n-s)^{(3 (1-\alpha))^{-1}}$ as in Paragraph~\ref{choice-of-beta} if $\alpha < 2/3$. 
If $\alpha \geq 2/3$ then we use an enumeration algorithm on the sublattice
derived from $\LL$ by deleting $s$ rows and colomns. 
As in Paragraph~\ref{subsec:sublattice} we study the optimal choice of $s$ 
over $[ 0, d-2]$ that minimizes the norms of the candidates~$\bar R$.

\paragraph{BKZ on sublattices.}
In order to do so, using $\beta$-BKZ and Theorems~\ref{bound-in-number-field} 
and~\ref{BKZ-bound} we get an upper bound on $\N(\bar R)$ as a function of $s$. Recall that the degree of $\bar R$ is upper bounded by $n-s-1$. We have:
\[
\N(\bar R) = O \left( 2^n \beta^{n \left( \frac{n-s-1}{2(\beta-1)} + \frac{3}{2} \right)} \ p^{n\frac{n-d}{n-s} + \zeta (n-s-1)}\right).
\]
Again our aim is to find the integer $s$ in $[ 0, d-2 ]$  that minimizes the function $h_{\hbox{\tiny BKZ}} : s \mapsto 2^n \beta^{n \left( \frac{n-s-1}{2(\beta-1)} + \frac{3}{2} \right)} \ p^{n\frac{n-d}{n-s} + \zeta (n-s-1)}.$
Let us write $\tilde s_1 = n - (2 (n-d) n \log(p) \log(\beta-1))^{1/2} \cdot (\log(\beta) + 2 \zeta \log(p)(\beta-1))^{-1/2} $, and $s_2$ be an integer such that $\tilde s_2 > n$. 
A simple analysis gives the following variation table for $h_{\hbox{\tiny BKZ}}$: 
\begin{center}
\begin{tikzpicture}[scale=0.7]
   \tkzTabInit{$s$ / 1 , $h_{\hbox{\tiny BKZ}}$ / 1}{,$\tilde s_1$, $\tilde s_2$,}
   \tkzTabVar{+/, -/, +/, -/}
\end{tikzpicture}
\end{center}
As in Paragraph~\ref{subsec:sublattice}, we deduce the following result
 that explicits where the function $h_{\hbox{\tiny BKZ}}$ is minimum over the integers 
between $0$ and $d-2$:

\begin{lemma} Let $\tilde s_1 = n - (2 (n-d) n \log(p) \log(\beta-1))^{1/2} \cdot (\log(\beta) + 2 \zeta \log(p)(\beta-1))^{-1/2} $. Then the number~$s_{opt}$ of rows and columns
to delete is given by the following cases:
\begin{enumerate}
\item \emph{If $\tilde s_1 < 0$}, then $s_{opt} = 0$.
\item \emph{If $0 \leq \tilde s_1 \leq d-2$}, then $s_{opt} = \lfloor \tilde s_1 \rfloor$ or $s_{opt} = \lceil \tilde s_1 \rceil$.
\item \emph{If $\tilde s_1 > d-2$}, then $s_{opt} = d-2$.
\end{enumerate}
\end{lemma}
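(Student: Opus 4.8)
The plan is to mimic the analysis already carried out in Paragraph~\ref{subsec:sublattice} for the LLL case, but now with the BKZ bound. First I would recall the upper bound on the norm obtained just above the lemma statement, namely that $\N(\bar R)$ is $O$ of
\[
h_{\hbox{\tiny BKZ}}(s) = 2^n \beta^{n \left( \frac{n-s-1}{2(\beta-1)} + \frac{3}{2} \right)} \ p^{n\frac{n-d}{n-s} + \zeta (n-s-1)},
\]
with $\beta$ fixed (it does not depend on $s$ once we have committed to $\beta = (n-s_{\text{target}})^{(3(1-\alpha))^{-1}}$, or more cleanly one treats $\beta$ as a constant for the purpose of this optimization, exactly as is implicitly done in the displayed variation table). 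Since $2^n$ is a constant in $s$, minimizing $h_{\hbox{\tiny BKZ}}$ is equivalent to minimizing its logarithm
\[
\varphi(s) = n\!\left( \frac{n-s-1}{2(\beta-1)} + \frac{3}{2} \right)\log \beta + \left(n\frac{n-d}{n-s} + \zeta (n-s-1)\right)\log p
\]
over the real interval containing $[0, d-2]$.

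Next I would differentiate $\varphi$ with respect to $s$. The term $\frac{n-s-1}{2(\beta-1)}\log\beta$ contributes $-\frac{n\log\beta}{2(\beta-1)}$, the term $n\frac{n-d}{n-s}\log p$ contributes $n(n-d)\log p \cdot (n-s)^{-2}$, and $\zeta(n-s-1)\log p$ contributes $-\zeta\log p$. Setting $\varphi'(s)=0$ gives
\[
\frac{n(n-d)\log p}{(n-s)^2} = \frac{n\log\beta}{2(\beta-1)} + \zeta\log p,
\]
so that $(n-s)^2 = \dfrac{2(n-d)\,n\log p\,(\beta-1)}{\log\beta + 2\zeta\log p\,(\beta-1)}$, which on taking square roots and isolating $s$ is exactly the claimed $\tilde s_1 = n - (2(n-d)n\log p\,\log(\beta-1))^{1/2}\cdot(\log\beta + 2\zeta\log p\,(\beta-1))^{-1/2}$ — modulo the typo $\log(\beta-1)$ in place of $(\beta-1)$ in the numerator, which I would correct to match the derivation. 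I would then check the second derivative: $\varphi''(s) = 2n(n-d)\log p\cdot(n-s)^{-3} > 0$ on $(-\infty, n)$, so $\varphi$ is strictly convex there, hence has a unique critical point $\tilde s_1 < n$ which is its global minimum on $(-\infty, n)$; it is decreasing on $(-\infty,\tilde s_1)$ and increasing on $(\tilde s_1, n)$. This justifies the variation table (with $\tilde s_2 > n$ being the second, irrelevant root coming from clearing denominators, exactly as $s_2$ in the LLL case).

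Finally I would transfer this to the integers. Since $\varphi$ is decreasing on $[0,\tilde s_1]$ and increasing on $[\tilde s_1, d-2]$ (using $d-2 < n$), the minimum of $\varphi$ over the integer points of $[0,d-2]$ is attained: (i) at $s=0$ if $\tilde s_1 < 0$, because then $\varphi$ is increasing throughout $[0,d-2]$; (ii) at one of $\lfloor\tilde s_1\rfloor$ or $\lceil\tilde s_1\rceil$ if $0\le\tilde s_1\le d-2$, by unimodality of a convex function restricted to a lattice; (iii) at $s=d-2$ if $\tilde s_1 > d-2$, because then $\varphi$ is decreasing throughout $[0,d-2]$. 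Since minimizing $h_{\hbox{\tiny BKZ}}$ and minimizing the norm bound amount to the same thing, this gives the three cases in the statement. The main obstacle is really just bookkeeping: making sure the differentiation handles the $(n-s)^{-1}$ term correctly, confirming strict convexity on the relevant interval so that the real optimum is unique and the discrete optimum lies among the two nearest integers, and flagging (or silently correcting) the $\log(\beta-1)$ versus $(\beta-1)$ discrepancy between the numerator of $\tilde s_1$ as written and what the first-order condition actually yields.
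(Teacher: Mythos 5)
Your approach is exactly the paper's: the paper gives no explicit proof of this lemma, only the variation table obtained by differentiating $h_{\hbox{\tiny BKZ}}$ (with $\beta$ treated as a constant), followed by the same endpoint/nearest-integer case split already used for LLL. Your write-up simply makes that calculus explicit — first-order condition, strict convexity of $\log h_{\hbox{\tiny BKZ}}$ on $(-\infty,n)$, then transfer to the integers of $[0,d-2]$ — and that structure is sound.

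There is, however, an algebra slip you should fix. Your first-order condition
\[
\frac{n(n-d)\log p}{(n-s)^2} \;=\; \frac{n\log\beta}{2(\beta-1)} + \zeta\log p
\]
is correct, but solving it gives
\[
(n-s)^2 \;=\; \frac{2(\beta-1)\,n(n-d)\log p}{\,n\log\beta + 2\zeta(\beta-1)\log p\,},
\]
i.e.\ the denominator carries a factor $n$ on $\log\beta$. Your displayed solved form (and the paper's $\tilde s_1$) drops that $n$, so your claim that the derivation reproduces the stated formula ``exactly, modulo the $\log(\beta-1)$ typo in the numerator'' is not accurate: the expression in the lemma differs from what your own first-order condition yields in the denominator as well as in the numerator. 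This does not affect the validity of the argument — the critical point is still the unique minimum of a convex function on $(-\infty,n)$, and the three cases follow as you describe — but since you explicitly set out to correct the formula, you should correct both discrepancies, not just one.
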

Let us  give a simple condition on $\zeta$ that ensures that the optimal sublattice to 
choose is a strict sublattice, and thus for such values of $\zeta$, we expect that this 
new algorithm outperforms 
all the smoothness algorithms mentioned previously. Our algorithm outputs
better candidates for the norms in the number field as soon as $\tilde s_1 \geq 1$.
Since 
$\tilde s_1 \geq 1\iff (n-1)^2 \geq \frac{2 (n-d) n  \log(\beta-1) \log p }{\log \beta 
+ 2 \zeta (\beta-1)  \log p }$, which is equivalent to have:
\[ \zeta \geq \frac{n \cdot (n-d)}{(n-1)^2} \cdot \frac{\log(\beta-1)}{\beta - 1} 
- \frac{\log \beta}{2 (\beta-1)  \log p}. \]

This condition for even extension degrees can be written as:
\[\zeta \geq \frac{1}{2} \left(\frac{n}{n-1}\right)^2 \cdot \frac{\log(\beta-1)}{\beta-1} - \frac{\log \beta}{2  (\beta - 1) \log p}.\]

\begin{example} We focus on the family of finite fields with fixed extension degree~$n=24$. 
Choosing $\beta = 6$ and looking at JLSV$_1$ for the polynomial selection, we see that: $\zeta = 0.5$ in one hand,
and $\frac{1}{2} \left(\frac{n}{n-1}\right)^2 \frac{\log(\beta-1)}{\beta-1} \approx 0.15$
 in the other hand, meaning that these parameters offer a convenient settings for our improvements. Similarly, choosing $n=12$, $\beta = 3$, and looking at JLSV$_1$, we have $\frac{1}{2} \left(\frac{n}{n-1}\right)^2 \frac{\log(\beta-1)}{\beta-1} \approx 0.21$
 which is a nice setting too.
 \end{example}
 
Even when the best choice of s is greater than 0, we still get norms in $L_Q(1, 1 + \zeta - d/n)$ as the ones we get when setting $s$ to $0$.
An optimal $s_{opt}>0$ means that we get smaller, yet asymptotically equivalent norms. In this sense, considering sublattices does not allow to lower the smoothing step 
asymptotic complexity.

\paragraph{Enumeration on sublattices.}
When an enumeration algorithm is used, the bound on the norm of the output is $ (n-s)^{n/2} p^{n\frac{n-d}{n-s} + \zeta (n-s-1)}$. Since we deal with large characteristic finite fields, any polynomial selection outputs polynomials of infinite norm smaller than $p^{1/2}$, thus we can assume $\zeta \leq 1/2$. Under this assumption, the bound above, as a function of s, is an increasing function, it reaches its minimum over the integers $[ 0, d-2 ]$ at $s=0$. We conclude that in large characteristic finite fields, when using an enumeration algorithm in Algorithm~\ref{our_algorithm}, it is asymptotically useless to decrease the lattice dimension.

\section{Lower practical norms}
\label{sec:practical}
As in most of discrete logarithm algorithms, we cannot deduce 
the behavior of our method on practical sizes by only looking at 
the major improvement on the asymptotic complexity. To tackle this question 
 we present in this section practical results obtained with our implementation
of Algorithm~\ref{our_algorithm}. This implementation including the finite field construction is given in \cite{Code:smoothness}. 
On the examples and sizes we have looked at, BKZ did not lead to
real important improvements for the norms with respect to LLL. For this reason
we present only our experiments with LLL to perform the lattice reductions
on sublattices of various sizes.

One run of our implementation takes as input a random target $T$ in a finite field 
of composite extension degree, a relevant and compatible number field~$\K$ 
and a parameter $s \in [ 0, d-2 ]$ and creates an 
element~$\bar R$ in~$\K$ to be tested for $B$-smoothness. This method is not applicable for $n=4$ but starts with a potential effect as soon as $d>2$, \textit{i.e.} $n \geq 6$. Note that whenever $s$ is set to $0$, 
then our implementation is an implementation of Guillevic's 
algorithm~\cite{guillevic_descent}, without the smoothness test.

\subsubsection{Target finite fields.} We consider 148 different finite fields with 
composite extension degrees varying from 4 to 50. Half of them have a $ 460$
to $500$-bit size
while the others
have a 2050 to 2080-bit size.
In order not to make the text more cumbersome, we use in the sequel
the term $500$-bit size and $2048$-bit size to refer to these two different
families of fields.
Indeed, for each composite degree $4 \leq n \leq 50$, 
the characteristic is set to the first prime larger than $2^{500//n}$ 
(resp.  $2^{2048//n + 1}$).
Note that we conducted experiments
with 700 and 1024-bit sizes too, but the results are similar and for the
sake of simplicity we do not 
detail these experiments here.

Each field~$\F_{p^n}$ is built alongside with a number field $\K_f = \Q[X]/(f)$ 
where $f$ is one of the polynomial given by  the JLSV$_1$ polynomial selection.
Thus we have $\deg(f)=n$ and $\|f\|_{\infty} = p^{1/2}$.
For each finite field, we ran an optimization code based on the alpha value \cite{guillevic:hal-02263098} and coefficients sizes to select the polynomials. 
The polynomials were selected among 100 pairs produced by the JLSV$_1$ polynomial selection.
The code for selecting the polynomials as well as the polynomials can be found at the GitLab repository \cite{Code:smoothness}.

\paragraph{Other polynomial selection methods.}
Other experiments not provided here show that our algorithm produces practical improvements when the coefficients of the polynomial that defines the finite field are sufficiently large. For instance, we do not manage to reduce the norms by more than 10 bits by using the Conjugation method.

\subsubsection{Target elements.}
In each finite field we randomly draw $1000$ elements that become our $1000$ targets. 
Each element~$T$ is given as an input for two algorithms: we note $R_1$ the output in the number field of Guillevic's one, and $R_2$ the output of our Algorithm~\ref{our_algorithm}. For each field we compute the mean of the norms 
in~$\K$ of all lifted targets, the mean of the norms in~$\K$ of all $R_1$, and the mean of the norms in~$\K$ of all~$R_2$. Auxiliary data and in particular 
these means are reported in Appendix~\ref{Appendix:Data}. Moreover, our implementation of algorithm~\ref{our_algorithm} can be found at the GitLab repository \cite{Code:smoothness}.

\subsubsection{Theoretical optimal choices versus practical experiments.}
For each finite field, we test several values of $s$. This allows to see that we are 
not yet experiencing asymptotic phenomenons, as the theoretical~$s$ 
given in Theorem~\ref{theo:setting_s} and the best 
practical ones differ. Again, these theoretical $s$
and practical good ones are in Appendix~\ref{Appendix:Data}.
For instance for the $476$-bit field~$\F_{p^{28}}$ the theoretical optimal $s$ is equal to $6$ whereas in practice $s=3$ gives good results.

\subsubsection{Lower norms in the number field.}
The results on 500-bit finite fields and the 2048-bit ones are
each presented with $4$ graphics: 
\begin{itemize}
\item Figures~\ref{fig_norms_500} and \ref{fig_norms_2048} show the
norms in~$\K$ of the lifted elements, of Guillevic's candidates, and ours, all as a function of~$n$.
\item Figures~\ref{fig_diff_500} and \ref{fig_diff_2048} present the benefit of our
method on the bitsizes of the norms in~$\K$ as a function of~$n$.
\item Figures~\ref{fig_diff_d_500} and \ref{fig_diff_d_2048} present the benefit of our
method on the bitsizes of the norms in~$\K$, but as
a function of $d$ the largest divisor of $n$.
\end{itemize}

All theses graphs do support our previous analysis.
Algorithm~\ref{our_algorithm} outputs elements of smaller norms in the number field than those output by~\cite{guillevic_descent}. For instance in the $500$-bit finite field of extension $16$ (resp. $48$) Algorithm~\ref{our_algorithm} allows to get elements to be tested for $B$-smoothness of size $6$ bits smaller  (resp. $36$ bits) than those output by~\cite{guillevic_descent}.
In the $2048$-bit finite field of extension degree~$32$ (resp. $50$), Algorithm~\ref{our_algorithm} allows to get elements to be tested for $B$-smoothness of size $11$ bits smaller (resp. $25$ bits) than those output by~\cite{guillevic_descent}.

\subsubsection{Higher Euclidean norms.} Figures~\ref{fig_eulidean_500} and \ref{fig_eulidean_2048} show the difference of sizes in basis~$2$ between the average Euclidean norms of Guillevic's candidates
 and the average Euclidean norms of candidates from Algorithm~\ref{our_algorithm}, as a function of the extension degree~$n$.
As expected,
the outputs of LLL  performed on our sublattices have greater Euclidean norms 
than those output by LLL on the original full lattice.

\subsubsection{The largest divisor effect.}
Moreover, one important remark is illustrated thanks to Figures~\ref{fig_diff_d_500} and \ref{fig_diff_d_2048}. 
Here we see that the higher $d$ is, the better our method performs 
with regard to~\cite{guillevic_descent}. 
This is not surprising as when $d$ increases, the set of choices
for the parameter~$s$ increases, and the degree of the output
decreases with~$s$.

\subsubsection{Improvement in the probability of smoothness.}
Let us look closely at two examples and compute the gain
we get in term of smoothness probability. 
First we look at the $500$-bit finite field~$\F_{p^{16}}$.
\cite{guillevic_descent} allows to get $501$-bit norms  and our algorithm 
gives $495$-bit norms. Let us set $B = 2^{35}$ 
for the smoothness bound\footnote{This is the value chosen in the 521-bit TNFS record 
on $\F_{p^6}$~\cite{DBLP:conf/asiacrypt/MicheliGP21}}. Using the dickman\_rho function 
implemented in sage, we get that the probability of \cite{guillevic_descent}'s output to be $B$-smooth is about $1.30 \times 10^{-18}$, and the one of our output 
is $3.65 \times 10^{-18}$. Our output is twice as likely to be $B$-smooth. 
Another example with the $2048$-bit finite field~$\F_{p^{50}}$.
We get using \cite{guillevic_descent}'s algorithm $2144$-bit norms whereas using Algorithm~\ref{our_algorithm} the norms have sizes around $2119$~bits.
Let us set $B = 2^{80}$. 
In this case our outputs are 4.6 times as likely to be $B$-smooth.

\begin{figure}[H]
\begin{minipage}[c]{0.5\textwidth}
\includegraphics[scale=0.4]{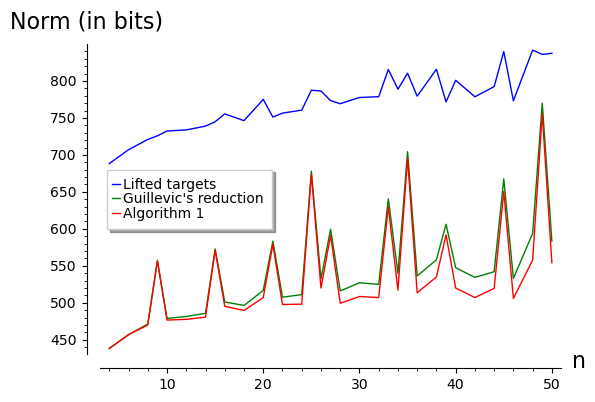}
\caption{Average norms in the number field of the lifted targets, of Guillevic's candidates
 and of candidates from Algorithm~\ref{our_algorithm}, as a function
 of the extension degree~$n$. Experiments run on approximately 500-bit finite fields.}
\label{fig_norms_500}
\end{minipage}
$\quad$
\begin{minipage}[c]{0.5\textwidth}
\includegraphics[scale=0.4]{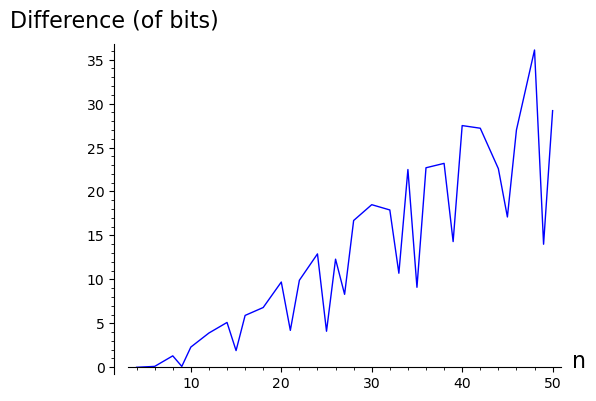}
\caption{Difference of sizes in basis 2 between the average norms of Guillevic's candidates
 and the average norms of candidates from Algorithm~\ref{our_algorithm}, as a function of the extension degree~$n$. Experiments run on  approximately 500-bit finite fields.}
\label{fig_diff_500}
\end{minipage}

\end{figure}

\vspace{-1.5cm}
\begin{figure}[H]

\begin{minipage}[c]{0.5\textwidth}
\includegraphics[scale=0.4]{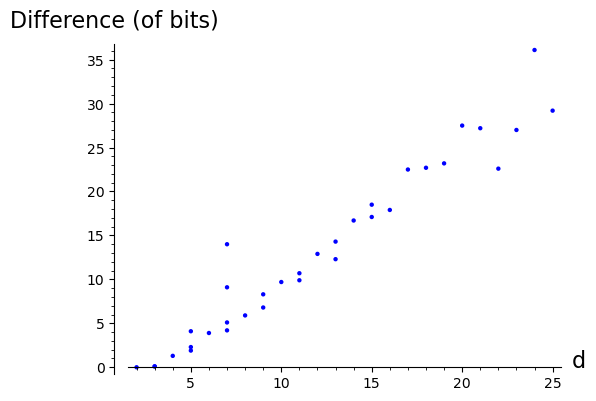}
\caption{Difference of sizes in basis 2 between the average norms of Guillevic's candidates
 and the average norms of candidates from Algorithm~\ref{our_algorithm}, as a function
 of $d$ the greatest proper divisor of $n$. Experiments run on  approximately 500-bit finite fields.}
\label{fig_diff_d_500}
\end{minipage}
$\quad$
\begin{minipage}[c]{0.5\textwidth}
\includegraphics[scale=0.4]{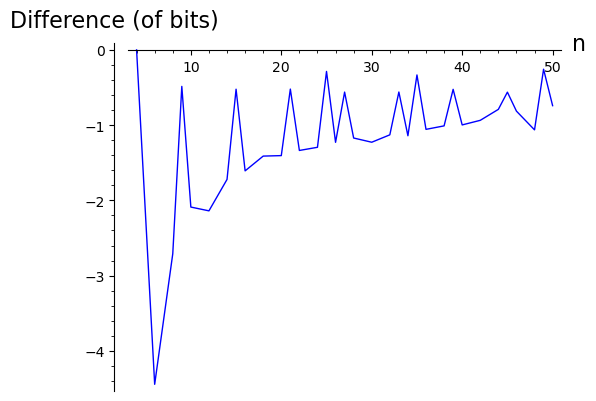}
\caption{Difference of sizes in basis 2 between the average Euclidean norms of Guillevic's candidates
 and the average Euclidean norms of candidates from Algorithm~\ref{our_algorithm}, as a function of the extension degree~$n$. Experiments run on  approximately 500-bit finite fields.}
\label{fig_eulidean_500}
\end{minipage}

\end{figure}

\begin{figure}[H]

\begin{minipage}[c]{0.5\textwidth}
\includegraphics[scale=0.4]{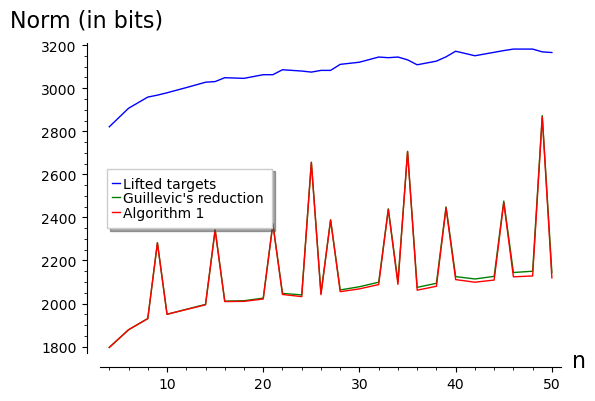}
\caption{Average norms in the number field of the lifted targets, of Guillevic's candidates
 and of candidates from Algorithm~\ref{our_algorithm}, as a function
 of the extension degree~$n$. Experiments run on  approximately 2048-bit finite fields.}
\label{fig_norms_2048}
\end{minipage}
$\quad$
\begin{minipage}[c]{0.5\textwidth}
\includegraphics[scale=0.4]{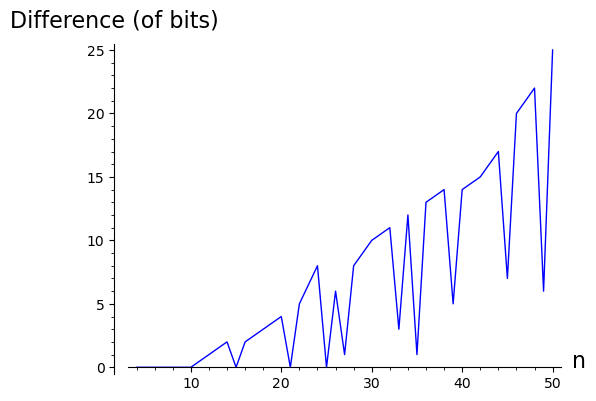}
\caption{Difference of sizes in basis 2 between the average norms of Guillevic's candidates
 and the average norms of candidates from Algorithm~\ref{our_algorithm}, as a function of the extension degree~$n$. Experiments run on approximately 2048-bit finite fields.}
\label{fig_diff_2048}
\end{minipage}
\end{figure}

\begin{figure}[H]

\begin{minipage}[c]{0.5\textwidth}
\includegraphics[scale=0.4]{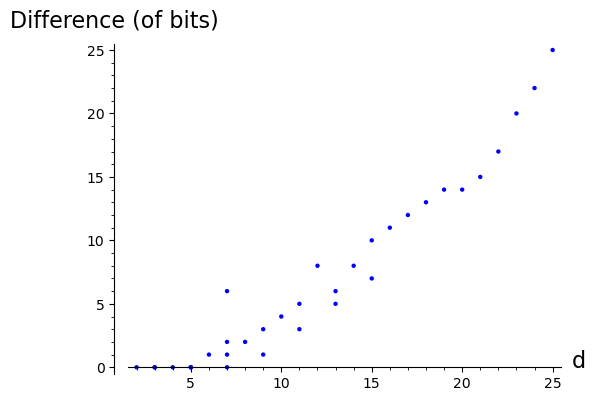}
\caption{Difference of sizes in basis 2 between the average norms of Guillevic's candidates
 and the average norms of candidates from Algorithm~\ref{our_algorithm}, as a function
 of $d$ the greatest proper divisor of $n$. Experiments run on approximately 2048-bit finite fields.}
\label{fig_diff_d_2048}
\end{minipage}
$\quad$
\begin{minipage}[c]{0.5\textwidth}
\includegraphics[scale=0.4]{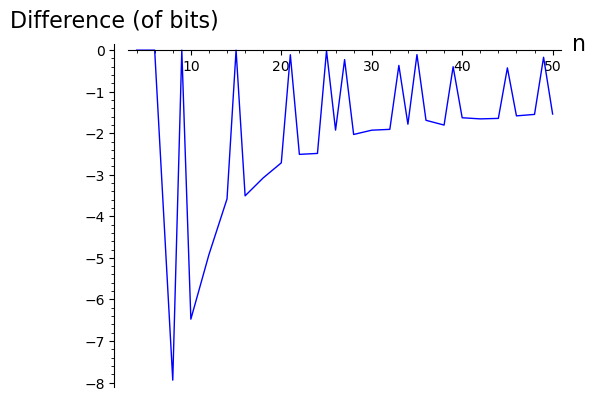}
\caption{Difference of sizes in basis 2 between the average Euclidean norms of Guillevic's candidates
 and the average Euclidean norms of candidates from Algorithm~\ref{our_algorithm}, as a function of the extension degree~$n$. Experiments run on approximately 2048-bit finite fields.}
\label{fig_eulidean_2048}
\end{minipage}

\end{figure}

\section*{Conclusion}
We proved that using BKZ reduction instead of LLL lowers the individual logarithm complexity in the lower half of the medium characteristic range.

In addition, experiments show that using sublattices to perform the smoothness step in the number field sieve can outperform the existing technique of using the whole lattice. This new technique outperforms the later when the composite extension degree is sufficiently large and the coefficients of the polynomial constructing the number field are large enough. 
For instance, these two conditions are fulfilled when dealing with medium characteristic finite fields and using the JLSV$_1$ polynomial selection. This set up is relevant since the JLSV$_1$ polynomial selection is both adapted in theory and in practice for TNFS and is well adapted for MNFS in the medium characteristic case, especially when one asks for a symmetric diagram. Such setting can be very useful for the MexTNFS variant in order to get many number fields of the same quality.

\section*{Declaration}
\begin{itemize}
\item \textbf{Funding:} Haetham Al Aswad is funded by French 
Ministry of Army  - AID Agence de l'Innovation de D\'{e}fense.  
C\'{e}cile Pierrot did not receive support from any organization 
for the submitted work.
\item \textbf{Financial interests:} The authors declare they have no financial interests.
\end{itemize}

The datasets generated during the  study are available in the "Smoothing step in NFS for composite extension degree finite fields" git lab repository, \cite{Code:smoothness}.

\section*{Acknowledgment}

This version of the article has been accepted for publication, after 
peer
review but is not the Version of Record and does 
not reflect post-acceptance
improvements, or any corrections. The Version of Record is 
available online at:
http://dx.doi.org/10.1007/s10623-023-01282-w. Use of this Accepted Version is 
subject to the publisher’s Accepted
Manuscript terms of use: \\ https://www.springernature.com/gp/open-
research/policies/acceptedmanuscript-terms

\bibliographystyle{alpha}
\bibliography{abbrev3,crypto,tnfs_bib}

\newcommand{\etalchar}[1]{$^{#1}$}
\begin{thebibliography}{BFHMV84}

\bibitem[AP22]{Code:smoothness}
Haetham~Al Aswad and C\'ecile Pierrot.
\newblock Smoothness step in {NFS} for composite extenion finite fields.
  https://gitlab.inria.fr/halaswad/smoothing-step-in-nfs-for-composite-extension-degree-finite-fields.
\newblock GitLab, 2022.

\bibitem[BF01]{C:BonFra01}
Dan Boneh and Matthew~K. Franklin.
\newblock Identity-based encryption from the {Weil} pairing.
\newblock In Joe Kilian, editor, {\em CRYPTO~2001}, volume 2139 of {\em
  {LNCS}}, pages 213--229. Springer, Heidelberg, August 2001.

\bibitem[BFHMV84]{blakeFMV}
Ian Blake, R.~Fuji-Hara, R.~Mullin, and S.~Vanstone.
\newblock Computing logarithms in finite fields of characteristic two.
\newblock {\em Siam Journal on Algebraic and Discrete Methods}, 5, 06 1984.

\bibitem[BGG{\etalchar{+}}20]{C:BGGHTZ20}
Fabrice Boudot, Pierrick Gaudry, Aurore Guillevic, Nadia Heninger, Emmanuel
  Thom{\'e}, and Paul Zimmermann.
\newblock Comparing the difficulty of factorization and discrete logarithm: {A}
  240-digit experiment.
\newblock In Hovav Shacham and Alexandra Boldyreva, editors, {\em CRYPTO~2020,
  Part~II}, {LNCS}, pages 62--91. Springer, Heidelberg, August 2020.

\bibitem[BGGM15a]{recordfp4}
Razvan Barbulescu, Pierrick Gaudry, Aurore Guillevic, and Fran{\c{c}}ois
  Morain.
\newblock {DL} record computation in $\mathbb{F}_{p^4}$ of $392$ bits, 2015.
\newblock
  \url{http://www.lix.polytechnique.fr/~guillevic/docs/guillevic-catrel15-talk.pdf}.

\bibitem[BGGM15b]{EC:BGGM15}
Razvan Barbulescu, Pierrick Gaudry, Aurore Guillevic, and Fran{\c c}ois Morain.
\newblock Improving {NFS} for the discrete logarithm problem in non-prime
  finite fields.
\newblock In Elisabeth Oswald and Marc Fischlin, editors, {\em EUROCRYPT~2015,
  Part~I}, volume 9056 of {\em {LNCS}}, pages 129--155. Springer, Heidelberg,
  April 2015.

\bibitem[BGJT14]{EC:BGJT14}
Razvan Barbulescu, Pierrick Gaudry, Antoine Joux, and Emmanuel Thom{\'e}.
\newblock A heuristic quasi-polynomial algorithm for discrete logarithm in
  finite fields of small characteristic.
\newblock In Phong~Q. Nguyen and Elisabeth Oswald, editors, {\em
  EUROCRYPT~2014}, volume 8441 of {\em {LNCS}}, pages 1--16. Springer,
  Heidelberg, May 2014.

\bibitem[BGK15]{AC:BarGauKle15}
Razvan Barbulescu, Pierrick Gaudry, and Thorsten Kleinjung.
\newblock The tower number field sieve.
\newblock In Tetsu Iwata and Jung~Hee Cheon, editors, {\em ASIACRYPT~2015,
  Part~II}, volume 9453 of {\em {LNCS}}, pages 31--55. Springer, Heidelberg,
  November~/~December 2015.

\bibitem[BLS01]{AC:BonLynSha01}
Dan Boneh, Ben Lynn, and Hovav Shacham.
\newblock Short signatures from the {Weil} pairing.
\newblock In Colin Boyd, editor, {\em ASIACRYPT~2001}, volume 2248 of {\em
  {LNCS}}, pages 514--532. Springer, Heidelberg, December 2001.

\bibitem[BMV84]{C:BlaMulVan84}
Ian~F. Blake, Ronald~C. Mullin, and Scott~A. Vanstone.
\newblock Computing logarithms in {$\text{GF}(2^n)$}.
\newblock In G.~R. Blakley and David Chaum, editors, {\em CRYPTO'84}, volume
  196 of {\em {LNCS}}, pages 73--82. Springer, Heidelberg, August 1984.

\bibitem[BP14]{barbulescu:hal-00952610}
Razvan Barbulescu and C{\'e}cile Pierrot.
\newblock {The Multiple Number Field Sieve for Medium and High Characteristic
  Finite Fields}.
\newblock {\em {LMS Journal of Computation and Mathematics}}, 17:230--246,
  2014.

\bibitem[CEP83]{canfield1983problem}
E.~Rodney Canfield, Paul Erd{\"o}s, and Carl Pomerance.
\newblock On a problem of {O}ppenheim concerning “factorisatio numerorum”.
\newblock {\em Journal of Number Theory}, 17(1):1--28, 1983.

\bibitem[FP85]{Fincke1985ImprovedMF}
Ulrich Fincke and Michael~E. Pohst.
\newblock Improved methods for calculating vectors of short length in a
  lattice.
\newblock {\em Mathematics of Computation}, 1985.

\bibitem[GKZ14]{C:GraKleZum14}
Robert Granger, Thorsten Kleinjung, and Jens Zumbr{\"a}gel.
\newblock Breaking `128-bit secure' supersingular binary curves - (or how to
  solve discrete logarithms in {$\mathbb{F}_{2^{4 \cdot 1223}}$} and
  {$\mathbb{F}_{2^{12 \cdot 367}}$}).
\newblock In Juan~A. Garay and Rosario Gennaro, editors, {\em CRYPTO~2014,
  Part~II}, volume 8617 of {\em {LNCS}}, pages 126--145. Springer, Heidelberg,
  August 2014.

\bibitem[Gr{\'{e}}]{dldatabase}
Laurent Gr{\'{e}}my.
\newblock Computations of discrete logarithms sorted by date.
\newblock \url{https://dldb.loria.fr/}.

\bibitem[GS21]{guillevic:hal-02263098}
Aurore Guillevic and Shashank Singh.
\newblock {On the Alpha Value of Polynomials in the Tower Number Field Sieve
  Algorithm}.
\newblock {\em {Mathematical Cryptology}}, 1(1):39, 2021.

\bibitem[Gui15]{AC:Guillevic15}
Aurore Guillevic.
\newblock Computing individual discrete logarithms faster in {$\text{GF}(p^n)$}
  with the {NFS}-{DL} algorithm.
\newblock In Tetsu Iwata and Jung~Hee Cheon, editors, {\em ASIACRYPT~2015,
  Part~I}, volume 9452 of {\em {LNCS}}, pages 149--173. Springer, Heidelberg,
  November~/~December 2015.

\bibitem[Gui19]{guillevic_descent}
Aurore Guillevic.
\newblock {Faster individual discrete logarithms in finite fields of composite
  extension degree}.
\newblock {\em {Mathematics of Computation}}, 88(317):1273--1301, January 2019.

\bibitem[HPS11]{C:HanPujSte11}
Guillaume Hanrot, Xavier Pujol, and Damien Stehl{\'e}.
\newblock Analyzing blockwise lattice algorithms using dynamical systems.
\newblock In Phillip Rogaway, editor, {\em CRYPTO~2011}, volume 6841 of {\em
  {LNCS}}, pages 447--464. Springer, Heidelberg, August 2011.

\bibitem[JLSV06]{C:JLSV06}
Antoine Joux, Reynald Lercier, Nigel Smart, and Frederik Vercauteren.
\newblock The number field sieve in the medium prime case.
\newblock In Cynthia Dwork, editor, {\em CRYPTO~2006}, volume 4117 of {\em
  {LNCS}}, pages 326--344. Springer, Heidelberg, August 2006.

\bibitem[Jou04]{DBLP:journals/joc/Joux04}
Antoine Joux.
\newblock A one round protocol for tripartite {D}iffie-{H}ellman.
\newblock {\em Journal of Cryptology}, 17(4):263--276, 2004.

\bibitem[JP14]{PAIRING:JouPie13}
Antoine Joux and C{\'e}cile Pierrot.
\newblock The special number field sieve in {$\mathbb{F}_{p^n}$} - application
  to pairing-friendly constructions.
\newblock In Zhenfu Cao and Fangguo Zhang, editors, {\em PAIRING 2013}, volume
  8365 of {\em {LNCS}}, pages 45--61. Springer, Heidelberg, November 2014.

\bibitem[Kan87]{kannan87}
Ravi Kannan.
\newblock Minkowski's convex body theorem and integer programming.
\newblock {\em Mathematics of Operations Research}, 12(3):415–440, aug 1987.

\bibitem[KB16]{C:KimBar16}
Taechan Kim and Razvan Barbulescu.
\newblock Extended tower number field sieve: {A} new complexity for the medium
  prime case.
\newblock In Matthew Robshaw and Jonathan Katz, editors, {\em CRYPTO~2016,
  Part~I}, volume 9814 of {\em {LNCS}}, pages 543--571. Springer, Heidelberg,
  August 2016.

\bibitem[KJ17]{PKC:KimJeo17}
Taechan Kim and Jinhyuck Jeong.
\newblock Extended tower number field sieve with application to finite fields
  of arbitrary composite extension degree.
\newblock In Serge Fehr, editor, {\em PKC~2017, Part~I}, volume 10174 of {\em
  {LNCS}}, pages 388--408. Springer, Heidelberg, March 2017.

\bibitem[KW19]{EPRINT:KleWes19}
Thorsten Kleinjung and Benjamin Wesolowski.
\newblock Discrete logarithms in quasi-polynomial time in finite fields of
  fixed characteristic.
\newblock Cryptology ePrint Archive, Report 2019/751, 2019.
\newblock \url{https://eprint.iacr.org/2019/751}.

\bibitem[LLL82]{LLL}
A.~K. Lenstra, H.~W. Lenstra, and L.~Lovasz.
\newblock {Factoring Polynomials with Rational Coefficients}.
\newblock {\em Mathematische Annalen}, 261:515--534, 1982.

\bibitem[MGP21]{DBLP:conf/asiacrypt/MicheliGP21}
Gabrielle~De Micheli, Pierrick Gaudry, and C{\'{e}}cile Pierrot.
\newblock Lattice enumeration for tower {NFS:} {A} 521-bit discrete logarithm
  computation.
\newblock In Mehdi Tibouchi and Huaxiong Wang, editors, {\em Advances in
  Cryptology - {ASIACRYPT} 2021 - 27th International Conference on the Theory
  and Application of Cryptology and Information Security, Singapore, December
  6-10, 2021, Proceedings, Part {I}}, volume 13090 of {\em Lecture Notes in
  Computer Science}, pages 67--96. Springer, 2021.

\bibitem[Mil04]{Miller_pairing}
Victor Miller.
\newblock The {W}eil pairing, and its efficient calculation.
\newblock {\em Journal of Cryptology}, 17:235--261, 2004.

\bibitem[MS20]{DBLP:journals/ffa/MukhopadhyayS20}
Madhurima Mukhopadhyay and Palash Sarkar.
\newblock Faster initial splitting for small characteristic composite extension
  degree fields.
\newblock {\em Finite Fields and Their Applications}, 62:101629, 2020.

\bibitem[MSST22]{Mukhopadhyay2022}
Madhurima Mukhopadhyay, Palash Sarkar, Shashank Singh, and Emmanuel Thom\'{e}.
\newblock New discrete logarithm computation for the medium prime case using
  the function field sieve.
\newblock {\em Advances in Mathematics of Communications}, 16(3):449--464,
  2022.

\bibitem[MV10]{STOC:MicVou10}
Daniele Micciancio and Panagiotis Voulgaris.
\newblock A deterministic single exponential time algorithm for most lattice
  problems based on voronoi cell computations.
\newblock In Leonard~J. Schulman, editor, {\em 42nd ACM STOC}, pages 351--358.
  {ACM} Press, June 2010.

\bibitem[MW15]{SODA:MicWal15}
Daniele Micciancio and Michael Walter.
\newblock Fast lattice point enumeration with minimal overhead.
\newblock In Piotr Indyk, editor, {\em 26th SODA}, pages 276--294. {ACM-SIAM},
  January 2015.

\bibitem[MW16]{EC:MicWal16}
Daniele Micciancio and Michael Walter.
\newblock Practical, predictable lattice basis reduction.
\newblock In Marc Fischlin and Jean-S{\'{e}}bastien Coron, editors, {\em
  EUROCRYPT~2016, Part~I}, volume 9665 of {\em {LNCS}}, pages 820--849.
  Springer, Heidelberg, May 2016.

\bibitem[Pie15]{EC:Pierrot15}
C{\'e}cile Pierrot.
\newblock The multiple number field sieve with conjugation and generalized
  {Joux}-{Lercier} methods.
\newblock In Elisabeth Oswald and Marc Fischlin, editors, {\em EUROCRYPT~2015,
  Part~I}, volume 9056 of {\em {LNCS}}, pages 156--170. Springer, Heidelberg,
  April 2015.

\bibitem[Sch00]{shiro}
Oliver Schirokauer.
\newblock {Using Number Fields to Compute Logarithms in Finite Fields}.
\newblock {\em Mathematics of Computation}, 69:1267--1283, 2000.

\bibitem[SE94]{DBLP:journals/mp/SchnorrE94}
Claus{-}Peter Schnorr and M.~Euchner.
\newblock Lattice basis reduction: Improved practical algorithms and solving
  subset sum problems.
\newblock {\em Mathematical Programming}, 66:181--199, 1994.

\bibitem[SS16]{AC:SarSin16}
Palash Sarkar and Shashank Singh.
\newblock A general polynomial selection method and new asymptotic complexities
  for the tower number field sieve algorithm.
\newblock In Jung~Hee Cheon and Tsuyoshi Takagi, editors, {\em ASIACRYPT~2016,
  Part~I}, volume 10031 of {\em {LNCS}}, pages 37--62. Springer, Heidelberg,
  December 2016.

\bibitem[SS19]{PalashSarkar2019AdvancesinMathematicsofCommunications}
Palash Sarkar and Shashank Singh.
\newblock A unified polynomial selection method for the (tower) number field
  sieve algorithm, 2019.

\bibitem[TPM]{TPM2019}
Trusted platform module.
\newblock
  \url{https://trustedcomputinggroup.org/resource/tpm-library-specification/}.
\newblock Latest Version Nov. 2019.

\bibitem[{Wie}86]{wiedemann}
Douglas~H. {Wiedemann}.
\newblock {Solving Sparse Linear Equations over Finite Fields}.
\newblock {\em IEEE Transactions on Information Theory}, 32(1):54--62, 1986.

\end{thebibliography}

\appendix

\section{Example}
\label{Appendix:Example}

We give a concrete example to better understand 
Algorithm~\ref{our_algorithm} and to see 
how decreasing the degree while allowing larger coefficients 
can result is smaller norms. Take the finite field $\F_{p^{28}}$ of size $476$ bits where $p = 131101$.

\paragraph{Construction of finite field and number fields: }
After running JLSV$_1$ polynomial selection to find 100 pairs of suitable polynomials. We choose the pair with the highest score for a notion of score based on the alpha value \cite{guillevic:hal-02263098} and the coefficient sizes. The code to select the pair of polynomials can be found at~\cite{Code:smoothness}.

$f_1(X) = X^{28} + 349X^{27} + 348X^{26} + 1040X^{25} + 349X^{24} + 348X^{23} + 1040X^{22} + 1040X^{21} + 695X^{20} + 1041X^{19} + 695X^{18} + 347X^{17} + 349X^{16} + 347X^{15} + 348X^{14} + 694X^{13} + 1039X^{12} + 348X^{11} + 347X^{10} + 348X^9 + 1039X^8 + 347X^7 + 695X^6 + 1041X^5 + 349X^4 + 1039X^3 + 347X^2 + 1041X + 349$.

$f_2(X) = -379X^{28} - 1170X^{27} - 791X^{26} - 857X^{25} - 1170X^{24} - 791X^{23} - 857X^{22} - 857X^{21} - 1203X^{20} - 1236X^{19} - 1203X^{18} - 412X^{17} - 1170X^{16} - 412X^{15} - 791X^{14} - 824X^{13} - 478X^{12} - 791X^{11} - 412X^{10} - 791X^9 - 478X^8 - 412X^7 - 1203X^6 - 1236X^5 - 1170X^4 - 478X^3 - 412X^2 - 1236X - 1170$. 

Moreover, $f_1$ is also irreducible in $\F_p[X]$, thus $\F_{p^{28}}$ is represented as:
\[\F_p[X]/(f_1) := \F_p(\alpha).\]
Since $f_1$ has smaller coefficients than $f_2$, it is natural to perform the smoothing step in $\K = \Q[X]/(f_1) := \Q(x)$. Denote by $\N$ the norm defined in $\K$ and for any element $Y$ in $\F_p(\alpha)$, $\bar{Y}$ denotes its natural preimage in $\K$.

\paragraph{Generator selection: }
Finding a generator of $\F_{p^n}^*$ requires factoring $p^n -1$ which is out of reach. Instead one chooses a random element $g \in \F_{p^n}^*$ and tests if $g^{(p^n - 1)/m} \not = 1$ for all $m$ running over small divisors of $p^n - 1$ (say all divisors smaller than $10^9$). Such an element has a very high probability of being a generator of $\F_{p^n}^*$, and is called a pseudo generator.
Running our code that is available at \cite{Code:smoothness}, we find the following pseudo generator of $\F_{p^{28}}^*$:
$g = 44501\alpha^{27} + 17288\alpha^{26} + 79714\alpha^{25} + 15355\alpha^{24} + 100146\alpha^{23} + 87012\alpha^{22} + 18126\alpha^{21} + 125995\alpha^{20} + 12941\alpha^{19} + 86746\alpha^{18} + 22260\alpha^{17} + 8816\alpha^{16} + 41799\alpha^{15} + 19116\alpha^{14} + 45121\alpha^{13} + 116926\alpha^{12} + 11767\alpha^{11} + 64435\alpha^{10} + 16296\alpha^9 + 33812\alpha^8 + 96819\alpha^7 + 40474\alpha^6 + 105343\alpha^5 + 71563\alpha^4 + 48599\alpha^3 + 102954\alpha^2 + 36712\alpha + 3594$.

\paragraph{Target selection: }
We choose a target constructed from the decimal digits of $\pi$.

$T = 1415926\alpha^{27} + 5358979\alpha^{26} + 3238462\alpha^{25} + 6433832\alpha^{24} + 7950288\alpha^{23} + 4197169\alpha^{22} + 3993751\alpha^{21} + 582097\alpha^{20} + 4944592\alpha^{19} + 3078164\alpha^{18} + 628620\alpha^{17} + 8998628\alpha^{16} + 348253\alpha^{15} + 4211706\alpha^{14} + 7982148\alpha^{13} + 865132\alpha^{12} + 8230664\alpha^{11} + 7093844\alpha^{10} + 6095505\alpha^9 + 8223172\alpha^8 + 5359408\alpha^7 + 1284811\alpha^6 + 1745028\alpha^5 + 4102701\alpha^4 + 9385211\alpha^3 + 555964\alpha^2 + 4622948\alpha + 9549303$.

After reducing each coefficient modulo $p$, the target $T$ becomes:
$T = 104916\alpha^{27} + 114939\alpha^{26} + 92038\alpha^{25} + 9883\alpha^{24} + 84228\alpha^{23} + 1937\alpha^{22} + 60721\alpha^{21} + 57693\alpha^{20} + 93855\alpha^{19} + 62841\alpha^{18} + 104216\alpha^{17} + 83760\alpha^{16} + 86051\alpha^{15} + 16474\alpha^{14} + 116088\alpha^{13} + 78526\alpha^{12} + 102402\alpha^{11} + 14390\alpha^{10} + 64859\alpha^9 + 94910\alpha^8 + 115368\alpha^7 + 104902\alpha^6 + 40715\alpha^5 + 38570\alpha^4 + 77040\alpha^3 + 31560\alpha^2 + 34413\alpha + 110031$. 

\paragraph{Outputs: }

To run our code \cite{Code:smoothness}, one starts by creating an instance from smoothness.sage: diag = Smoothness(p, n, f1, f1, g), and then one calls the method smoothness\_lattice\_n: 
$R_1$, $R_2$, $s_{best}$ = diag.smoothness\_lattice\_n(T). We get $\overline{R_1}$ the output of the algorithm of~\cite{guillevic_descent} (i.e: Algorithm~\ref{our_algorithm} with $s=0$) and $\overline{R_2}$ the output of Algorithm~\ref{our_algorithm} for the best choice of $s$, that is $s_{practical}$. We recall that $s_{practical}$ is the number of columns erased from the lattice  that results in the output of the smallest element, that is $\overline{R_2}$. We get:

$\overline{R_1} = -13x^{27} - 51x^{26} - 10x^{25} + 100x^{24} + 219x^{23} + 80x^{22} + 98x^{21} + 54x^{20} - 5x^{19} + 113x^{18} - 195x^{17} + 92x^{16} - 46x^{15} - 99x^{14} + 9x^{13} + 77x^{12} - 173x^{11} + 77x^{10} + 57x^9 + 213x^8 - 82x^7 - 107x^6 - 76x^5 - 58x^4 - 8x^3 + 34x^2 - 64x - 28$.

$\overline{R_2} = 175x^{23} - 87x^{22} - 10x^{21} + 305x^{20} + 233x^{19} - 37x^{18} - 151x^{17} - 123x^{16} - 30x^{15} + 105x^{14} + 145x^{13} - 214x^{12} + 143x^{11} + 432x^{10} + 63x^9 - 222x^8 - 17x^7 - 303x^6 - 309x^5 - 239x^4 + 25x^3 - 373x^2 - 330x - 174$, where $s_{practical} = 4$.

\paragraph{Norms of the target and the outputs: }
The norm of the target is $\N\left( \bar T \right) \approx 2^{769}$, the norm of $\overline{R_1}$ is $\N\left( \overline{R_1} \right) \approx 2^{507}$, and the norm of $R_2$ is $\N\left( \overline{R_2} \right) \approx 2^{492}$. Our algorithm outputs here an element of norm 15 bits smaller than the one output by \cite{guillevic_descent}.
We emphasize that $\overline{R_1}$ is of degree maximal $27$ whereas $\overline{R_2}$ is of degree $27-4 = 23$ and has slightly larger coefficients.

\paragraph{Probability of smoothness: }Fix a smoothness bound $B = 2^{35}$. Then using the dickman\_rho function implemented is sage, the probability of $\N\left(\overline{R_1}\right)$ being $B$-smooth is about $6.45 \times 10^{-19}$ and the probability of $\N\left(\overline{R_2}\right)$ being $B$-smooth is about $3.77 \times 10^{-18}$. Our output is $5.8$ more likely to be smooth.

\paragraph{Larger example: } As shown in Section~\ref{sec:practical}, our algorithm performs the best as the degree extension $n$ grows. For instance let us look at the $2050$-bits finite field $\F_{p^n} = \F_{2199023255579^{50}}$. All the parameters for this setting, such as the polynomials selected and the generator, can be found in the GitLab repository \cite{Code:smoothness}. Similarly as above, applying Algorithm~\ref{our_algorithm} leads to the following:
\begin{enumerate}
\item The norm of the target chosen with the decimals of $\pi$ is $\N\left(\overline{T}\right) \approx 2^{3152}$
\item The norm of the output $\overline{R_1}$ of \cite{guillevic_descent}'s algorithm is $\N \left( \overline{R_1} \right) \approx 2^{2138}$
\item The norm of the output $\overline{R_2}$ of Algorithm~\ref{our_algorithm} with the best $s$ is $\N \left( \overline{R_2} \right) \approx 2^{2121}$, where the best $s$ is $s_{practical} = 4$. 
\end{enumerate}
In this example our output is $2^{17}$ times smaller. If the smoothness bound is set to $B=2^{70}$,  then our output is about $3.5$ times more likely to be $B$-smooth.
Since the smoothness probability is higher, one can set a lower smoothness bound in order to get a smaller descent tree. 

\section{Data}
\label{Appendix:Data}
The next two tables present the results of our experiments: $n$ is the extension degree, $d$ is the largest divisor of $n$, \textbf{$p$ in bits} is the number of bits of the characteristic $p$, \textbf{Bitsize of the field} is the size of the finite field $\F_{p^n}$ in bits, \textbf{Input norms in bits} is the mean in bits of the norms in the number field of the 1000 targets, \textbf{Output norms with [Guil19] in bits} is the mean in bits of the norms output by [Guil19], \textbf{Our norms in bits} is the mean in bits of the norms output by Algorithm~$1$, $s_{practical}$ is the mean of the best choice 
of~$s$ in Algorithm~$1$ in practice rounded to one decimal place, and $s_{theoretical}$ is the optimal $s$ given from the asymptotic formula rounded to the integer below. 
Each given norm in a given finite field is a mean of the norms of 1000 elements.
The data is sorted in respect to $n$ the extension degree.
Moreover, the polynomials selected for the experiments, the pseudo generators of the multiplicative group in each finite field, and the implementation that produced this data are available at~\cite{Code:smoothness}.

\begin{table}[h!]
\label{tabel_500}
\centering
\begin{tabular}{|c|c|c|c|c|c|c|c|c|}
\hline
\textbf{n} & \textbf{d} & \textbf{p in bits} & \textbf{Bitsize of} & \textbf{Input norms} & \textbf{Output norms} & \textbf{Our norms} & \textbf{$s_{practical}$} & \textbf{$s_{theoretical}$} \\
& & & \textbf{the field}  & \textbf{in bits} & \textbf{with [Gui19] in bits} & \textbf{in bits} & & \\
\hline
4&2&125&500&688&438&438&0.0&0\\
\hline
6&3&83&498&707&457&457&0.1&0\\
\hline
8&4&62&496&721&471&470&0.3&0\\
\hline
9&3&55&495&726&557&557&0.0&0\\
\hline
10&5&50&500&732&479&477&0.4&0\\
\hline
12&6&41&492&734&482&478&0.7&0\\
\hline
14&7&35&490&739&486&481&0.9&0\\
\hline
15&5&33&495&745&573&571&0.2&0\\
\hline
16&8&31&496&755&501&495&1.2&0\\
\hline
18&9&27&486&746&497&490&1.4&0\\
\hline
20&10&25&500&775&517&507&1.7&0\\
\hline
21&7&23&483&751&583&579&0.6&0\\
\hline
22&11&22&484&757&508&498&2.1&0\\
\hline
24&12&20&480&761&511&498&2.5&2\\
\hline
25&5&20&500&787&678&674&0.4&0\\
\hline
26&13&19&494&786&532&520&2.8&3\\
\hline
27&9&18&486&773&600&591&1.1&1\\
\hline
28&14&17&476&769&516&499&3.2&6\\
\hline
30&15&16&480&778&527&509&3.9&8\\
\hline
32&16&15&480&779&525&507&4.2&10\\
\hline
33&11&15&495&815&641&630&1.8&7\\
\hline
34&17&14&476&789&540&517&5.0&12\\
\hline
35&7&14&490&810&704&695&1.1&5\\
\hline
36&18&13&468&779&536&514&5.5&14\\
\hline
38&19&13&494&816&558&535&5.7&15\\
\hline
39&13&12&468&772&606&592&2.7&11\\
\hline
40&20&12&480&801&548&520&6.5&18\\
\hline
42&21&11&462&779&534&507&7.2&19\\
\hline
44&22&11&484&792&542&520&6.9&20\\
\hline
45&15&11&495&840&668&651&4.0&13\\
\hline
46&23&10&460&773&533&506&8.2&21\\
\hline
48&24&10&480&842&594&558&10.4&22\\
\hline
49&7&10&490&836&770&756&2.5&5\\
\hline
50&25&10&500&837&584&554&8.8&23\\
\hline
\end{tabular}
\caption{
Experiments are run on  460 to 500-bit finite fields. The extension degree varies from $4$ to $50$.} 
\end{table}

\begin{table}[h!]
\label{table_2048}
\centering
\begin{tabular}{|c|c|c|c|c|c|c|c|c|}
\hline
\textbf{n} & \textbf{d} & \textbf{p in bits} & \textbf{Bitsize of} & \textbf{Input norms} & \textbf{Output norms} & \textbf{Our norms} & \textbf{$s_{practical}$} & \textbf{$s_{theoretical}$} \\
& & & \textbf{the field}  & \textbf{in bits} & \textbf{with [Gui19] in bits} & \textbf{in bits} & & \\
\hline
4&2&513&2052&2821&1796&1796&0.0&0\\
\hline
6&3&342&2052&2907&1878&1878&0.0&0\\
\hline
8&4&257&2056&2959&1930&1930&0.0&0\\
\hline
9&3&228&2052&2968&2282&2282&0.0&0\\
\hline
10&5&205&2050&2979&1950&1950&0.0&0\\
\hline
12&6&171&2052&3003&1973&1972&0.1&0\\
\hline
14&7&147&2058&3028&1996&1994&0.2&0\\
\hline
15&5&137&2055&3031&2342&2342&0.0&0\\
\hline
16&8&129&2064&3049&2011&2009&0.3&0\\
\hline
18&9&114&2052&3046&2013&2010&0.5&0\\
\hline
20&10&103&2060&3063&2025&2021&0.6&0\\
\hline
21&7&98&2058&3063&2370&2370&0.0&0\\
\hline
22&11&94&2068&3086&2047&2042&0.7&0\\
\hline
24&12&86&2064&3080&2040&2032&0.8&0\\
\hline
25&5&82&2050&3075&2656&2656&0.0&0\\
\hline
26&13&79&2054&3083&2047&2041&0.8&0\\
\hline
27&9&76&2052&3083&2389&2388&0.1&0\\
\hline
28&14&74&2072&3111&2063&2055&1.0&0\\
\hline
30&15&69&2070&3121&2078&2068&1.1&0\\
\hline
32&16&65&2080&3145&2099&2088&1.3&0\\
\hline
33&11&63&2079&3142&2440&2437&0.2&0\\
\hline
34&17&61&2074&3145&2101&2089&1.4&0\\
\hline
35&7&59&2065&3132&2707&2706&0.1&0\\
\hline
36&18&57&2052&3109&2075&2062&1.6&0\\
\hline
38&19&54&2052&3126&2094&2080&1.9&0\\
\hline
39&13&53&2067&3146&2449&2444&0.3&0\\
\hline
40&20&52&2080&3172&2125&2111&1.9&0\\
\hline
42&21&49&2058&3151&2114&2099&2.2&0\\
\hline
44&22&47&2068&3167&2126&2109&2.4&0\\
\hline
45&15&46&2070&3175&2476&2469&0.5&0\\
\hline
46&23&45&2070&3182&2144&2124&2.6&0\\
\hline
48&24&43&2064&3182&2150&2128&2.9&2\\
\hline
49&7&42&2058&3169&2873&2867&0.2&0\\
\hline
50&25&41&2050&3166&2144&2119&3.2&4\\
\hline

\end{tabular}
\caption{
Experiments are run on 2050 to 2080-bit finite fields. The extension degree varies from $4$ to $50$.} 
\end{table}

\end{document}